\numberwithin{equation}{section}
\newtheorem{theorem}{Theorem}[section]
\newtheorem{lemma}[theorem]{Lemma}
\newtheorem{corollary}[theorem]{Corollary}
\newtheorem{proposition}[theorem]{Proposition}
\DeclareMathOperator{\Det}{Det} \DeclareMathOperator{\Tr}{Tr}
 \DeclareMathOperator{\Rea}{Re}
\DeclareMathOperator{\Arg}{Arg}
\title [Complex Weyl symbols...]{ Complex Weyl symbols of metaplectic operators: An elementary approach}
\author {Benjamin Cahen}
\address{Universit\'e de Lorraine, Site de Metz, UFR-MIM,
D\'epartement de math\'ematiques,
B\^atiment A,
3 rue Augustin Fresnel, BP 45112,
57073 METZ Cedex 03, France.}
\email{benjamin.cahen@univ-lorraine.fr}
\subjclass[2000]{22E45; 22E70; 81R05; 81S10; 81R30.} \keywords{Complex Weyl calculus;
Weyl correspondence; Fock space; Bargmann-Fock representation; Berezin quantization; Heisenberg group; metaplectic representation; symplectic group; Jacobi group;
reproducing kernel Hilbert space.}
\begin{document}

\maketitle

\begin{abstract} We give explicit formulas for the Berezin symbols and the complex Weyl symbols of the metaplectic representation operators by using the holomorphic representations of the Jacobi group. Then we recover some known formulas for the symbols of the metaplectic operators in the classical Weyl calculus, in particular for the classical Weyl symbol of the exponential of an operator whose Weyl symbol is a quadratic form.
\end{abstract}

\vspace{1cm}

\section {Introduction} \label{sec:intro}
The metaplectic representation (also called oscillator representation or Weil representation) is
a projective unitary representation of the symplectic group $Sp(n,{\mathbb R})$ which was first investigated by I. E. Segal,
D. Shale and A. Weil, see for instance \cite{Fo} and its references. The metaplectic representation plays an important role in very different aeras of mathematics such as number theory (automorphic forms) and mathematical physics (quantum mechanics).

For the computations, it is sometimes convenient to realize $Sp(n,{\mathbb R})$ as a subgroup $S$ of $SU(n,n)$, see \cite{Fo}, p. 175.  The group $S$ acts naturally
on the $(2n+1)$-dimensional
(real) Heisenberg group $H_n$ and then on the generic representations (the non-degenerated unitary irreducible representations) of $H_n$ on the Fock space $\mathcal F$.
Let $k\cdot \rho$ denote the action of 
$k\in S$ on the generic representation $ \rho$ of $H_n$.
Then $k\cdot  \rho$
and $ \rho$ are unitarily equivalent representations and there exists a unitary operator $\sigma(k)$ on $\mathcal F$ (defined up to a unit scalar) such that 
\begin{equation}\label{eq:inter}(k\cdot  \rho)(h)\sigma(k)=\sigma(k) \rho(h)\end{equation}
for each $h\in H_n$. The map $\sigma$ is thus the metaplectic representation of $S$.

One can find in the literature various methods to construct the metaplectic representation of $S$, that is, to obtain explicit formulas for $\sigma(k)$, $k\in S$,
see for instance \cite{Fo}  and \cite{KVe}. However, as mentioned in \cite{Ne}, p. 533, the most direct method is to use the holomorphic representations of the (multi-dimensional) Jacobi group $G:=H_n\rtimes S$. Here we apply this method in full details in order to get explicit expressions for the kernels of the metaplectic operators $\sigma(k)$ and then for the Berezin symbols of  $\sigma(k)$. Note that this method also works for the harmonic representation of 
$SU(p,q)$, see \cite{CaHarm}.

The complex Weyl calculus $W_0$ is the correspondence between operators on $\mathcal F$ and functions on ${\mathbb C}^n$ obtained by translating the usual Weyl correspondence
(see \cite{Fo}, \cite{Ho1}) by means of the Bargmann transform. It is known that $W_0$
is the unitary component in the polar decomposition of the Berezin correspondence on $\mathcal F$, see \cite{Luo}, \cite{CaRiv}.

In the present paper, we give explicit formulas for the complex Weyl symbols of the metaplectic operators. More precisely, we compute $W_0(\sigma(k))$ for $k\in S$ and $W_0(d\sigma(X))$
for $X$ in the Lie algebra $\mathfrak s$ of $S$. Our method is quite elementary, let us describe it briefly. For any operator $A$ on $\mathcal F$, $W_0(A)$ can be expressed by an integral formula involving the kernel of $A$. This allows us to reduce the computation of  $W_0(d\sigma(X))$ to that of some Gaussian integral. 

As an immediate consequence, we recover some known formulas for $W_1(\sigma'(g))$, $g\in Sp(n,{\mathbb R})$ and 
 $W_1(d\sigma'(X))$, $X\in sp(n,{\mathbb R})$ where $\sigma'$ denotes the metaplectic representation of $Sp(n,{\mathbb R})$ and $W_1$ the inverse map of the classical Weyl correspondence, \cite{Fo}, \cite{CR1}. In particular, since $W_1(d\sigma'(X))$ is a quadratic form on ${\mathbb R}^{2n}$ for each $X\in sp(n,{\mathbb R})$, we recover a formula of \cite{Ho2} for the Weyl symbol of the exponential of an operator having a quadratic form as Weyl symbol. A similar formula for the star exponential
of a quadratic form for the Moyal star product can be found in \cite{BM}. Note that these formulas were then established in  \cite{Ho2} and \cite{BM} by solving some differential systems, see also \cite{CR2}. Note also that the study of the Weyl symbol of the function of an operator whose Weyl symbol is quadratic is still a subject of active research, see for instance
\cite{Der2} and references therein.

The plan of this paper is as follows. In Section \ref{sec:2}, we review some generalities
about the non-degenerated unitary irreducible representations of $H_n$ on the Fock space
and about the Berezin correspondence. In Section \ref{sec:3}, we introduce the complex Weyl correspondence and we emphasize its connection with the Berezin correspondence and also with
the classical Weyl correspondence. In Section \ref{sec:4}, we consider the metaplectic representation $\sigma$ and we give a functional equation satisfied by the kernel of $\sigma(k)$ for $k\in S$.
Section \ref{sec:5} is devoted to a short presentation of the (multi-dimensional) Jacobi group
$G=H_n\rtimes S$ and its holomorphic representations \cite{Bern}. We deduce from the
formula for the Berezin symbol of a holomorphic representation operator  $\pi(h,k)$ given in
\cite{CaPad2} a functional equation for the kernel of $\pi(id,k)$ which is used in Section \ref{sec:6} to find formula for the kernel of $\sigma(k)$. From this, we derive in Section \ref{sec:7} a formula for $W_0(\sigma(k))$ ($k\in S$). Finally, we compute $W_0(d\sigma(X))$ ($X\in {\mathfrak s}$) in Section \ref{sec:8} and we relate the results to those of \cite{Ho2} and \cite{BM}.

\section{Heisenberg group: Berezin quantization} \label{sec:2}

In this section, we first review some general facts on the
the Bargmann-Fock
model for the unitary irreducible (non-degenerated)
representations of the Heisenberg group, see \cite{Fo}. We  follow the presentation
of \cite{CaTo}, see also \cite{CaTs} .

For each $z,\,w \in {\mathbb C}^{n}$, we denote $zw:=\sum_{k=1}^nz_kw_k$. For each $z, z',w,w'\in {\mathbb C}^{n}$, let
\begin{equation*}\omega
((z,w),(z',w'))=\tfrac{i}{2}(zw'-z'w).
\end{equation*}

Then the $(2n+1)$-dimensional real Heisenberg group is $$H:=\{((z,{\bar
z}),c)\,:\,z\in {\mathbb C}^n, c\in {\mathbb R}\}$$ endowed with the
multiplication law
\begin{equation*}((z,{\bar
z}),c)\cdot ((z',{\bar z'}),c')=((z+z',{\bar z}+{\bar
z'}),c+c'+\tfrac{1}{2}\omega ((z,{\bar z}),(z',{\bar
z'}))).\end{equation*}

Let $\lambda>0$. By the Stone-von Neumann
theorem, there exists a unique (up to unitary equivalence) unitary
irreducible representation $\rho_{\lambda}$ of $H_n$ whose restriction to the center
of $H_n$ is the character $(0,c)\rightarrow e^{i\lambda c}$
\cite{Tay}. 
The Bargmann-Fock realization of $\rho_{\lambda}$ is defined as follows \cite{Barg}. 

Let ${\mathcal F}_{\lambda}$ be the Hilbert space of all holomorphic functions $f$
on ${\mathbb C}^n$ such that \begin{equation*}\Vert f\Vert^2_{{\mathcal F}_{\lambda}}
:=\int_{{\mathbb C}^n} \vert f(z)\vert^2\, e^{-\lambda \vert
z\vert^2/2}\,d\mu_{\lambda} (z) <+\infty\end{equation*} where
 $d\mu_{\lambda}(z):=(2\pi
)^{-n}{\lambda}^n\,dm(z)$. Here $z=x+iy$ with $x$ and $y$ in ${\mathbb
R}^n$ and $dm(z):= dx\,dy$ is the standard Lebesgue measure on ${\mathbb
C}^n$.

Then 
\begin{equation*}({\rho}_{\lambda}(h)f)(z)=\exp \left(i\lambda c_0+\tfrac{\lambda}{2}{\bar z_0}z-\tfrac{\lambda}{4}\vert z_0\vert^2\right)\,f(z- z_0) \end{equation*}
for each $h=(z_0, c_0)\in H_n$ and $z\in {\mathbb C}^n$.

For each $z\in {\mathbb C}^n$, consider the \textit {coherent state}
$e_z(w)=\exp (\lambda{\bar z}w/2)$. Then we have the reproducing property
$f(z)=\langle f,e_z\rangle_{{\mathcal F}_{\lambda}}$ for each $f\in {\mathcal
F}_{\lambda}$.

We can introduce the Berezin
calculus on ${\mathcal F}_{\lambda}$ \cite{Be1}, \cite{Be2}, \cite{CaRiv}.
The Berezin (covariant) symbol of an operator $A$ on ${\mathcal
F}_{\lambda}$ is the
function $S_{\lambda}(A)$ defined on ${\mathbb C}^n$ by
\begin{equation*}S_{\lambda}(A)(z):=\frac {\langle A\,e_z\,,\,e_z\rangle_{{\mathcal F}_{\lambda}}}{\langle e_z\,,\,e_z\rangle_{{\mathcal F}_{\lambda}}} \end{equation*}
and the double Berezin symbol $s_{\lambda}$ is defined by
\begin{equation*}s_{\lambda}(A)(z,w):=\frac {\langle A\,e_w\,,\,e_z\rangle_{{\mathcal F}_{\lambda}}}{\langle e_w\,,\,e_z\rangle_{{\mathcal F}_{\lambda}}} \end{equation*}
for each $(z,w)\in {\mathbb C}^n \times {\mathbb C}^n$ such that 
$\langle e_w\,,\,e_z\rangle_{{\mathcal F}_{\lambda}}\not= 0$.

Note that $s_{\lambda}(A)(z,w)$ is holomorphic in the variable $z$ and anti-holomorphic in the variable $w$, then $s_{\lambda}(A)$ is determined by its restriction to the diagonal of ${\mathbb C}^n \times {\mathbb C}^n$, that is, by $S_{\lambda}(A)$. Moreover, the operator $A$ can be recovered from $s_{\lambda}(A)$ as follows. We have

\begin{align*}
A\,f(z)&=\langle A\,f\,,\,e_z \rangle_{{\mathcal F}_{\lambda}}  
  =\langle f\,,\,A^{\ast}\,e_z\rangle_{{\mathcal F}_{\lambda}}  \\
&=\int _{{\mathbb C}^n}\,f(w)\overline {A^{\ast}\,e_z(w)}\,e^{-\lambda \vert
w\vert^2/2}\,d\mu_{\lambda} (w) \\
&=\int _{{\mathbb C}^n}\,f(w)\overline {\langle A^{\ast}\,e_z,e_w\rangle}\,e^{-\lambda \vert
w\vert^2/2}\,d\mu_{\lambda} (w) \\
&=\int _{{\mathbb C}^n} \,f(w)\,s_{\lambda}(A)(z,w)\langle e_w,e_z\rangle\,e^{-\lambda \vert
w\vert^2/2}\,d\mu_{\lambda}(w).\\ 
\end{align*}
In particular, we see that  the map $A\rightarrow S_{\lambda}(A)$ is injective and that the
kernel of $A$ is the function
\begin{equation} k_A(z,w)=\langle Ae_w,e_z\rangle_{{\mathcal F}_{\lambda}}=s_{\lambda}(A)(z,w)\langle e_w,e_z\rangle_{{\mathcal F}_{\lambda}}.\end{equation}

The map $S_{\lambda}$ is a bounded operator from the
space ${\mathcal L}_2({\mathcal F}_{\lambda})$ of all Hilbert-Schmidt operators
on ${\mathcal F}_{\lambda}$ (endowed with the
Hilbert-Schmidt norm) to $L^2({\mathbb C}^n,\mu_{\lambda})$  which
is one-to-one and has dense range \cite{UU}. Let us introduce the Berezin transform which will be needed later. Let $S_{\lambda}^{\ast}$ be the adjoint operator of $S_{\lambda}$.
Then
the Berezin transform is the operator $B_{\lambda}$
on $L^2({\mathbb C}^n,\mu_{\lambda})$ defined by $B_{\lambda}:=S_{\lambda}S_{\lambda}^{\ast}$.
We have the integral formula
\begin{equation*}(B_{\lambda}f)(z)=\int_{{\mathbb C}^n}\,f(w)
\,e^{ -\lambda\vert z-w\vert^2/2}\,d\mu_{\lambda}(w),\end{equation*}
see \cite{Be1}, \cite{Be2}, \cite{UU}.
Note also that we have $B_{\lambda}=\exp (\Delta/2\lambda)$ where
$\Delta=4\sum_{k=1}^n\partial^2/\partial z_k\partial {\bar z}_k$,
see \cite{UU}, \cite{Luo}.

\section{Complex Weyl correspondence for Heisenberg group} \label{sec:3}

The complex Weyl correspondence can be constructed from a \textit{Stratonovich-Weyl quantizer}
see \cite{St}, \cite{GB}, \cite{CaTo} and \cite{AU1}, Example 2.2 and Example 4.2.

Let $R_0$ be the parity operator on ${\mathcal F}_{\lambda}$ defined by
\begin{equation*}(R_0f)(z)=2^nf(-z). \end{equation*}

Then we define
\begin{equation*}\Omega_0(z):=\rho_{\lambda}((z,{\bar z}),0)R_0\rho_{\lambda}((z,{\bar z}),0)^{-1} \end{equation*} for each $z\in {\mathbb C}^n$. 
By an easy computation we get
\begin{equation}\label{eq:om}(\Omega_0(z)f)(w)=2^n\exp \left({\lambda}(w{\bar z}-\vert z\vert^2)\right) f(2z-w) \end{equation}
for each $z, w\in {\mathbb C}^n$ and $f\in {\mathcal F}_{\lambda}$. 
The map $\Omega_0$ is called a \textit{Stratonovich-Weyl quantizer}.
For each trace-class operator $A$ on ${\mathcal F}_{\lambda}$, we define 
\begin{equation*} W_0(A)(z):=\Tr(A\Omega_0(z))\end{equation*}
for each $z \in {\mathbb C}^n$.

Recall that we denote by $k_A$ the kernel  
of the trace-class (or more generally Hilbert-Schmidt) operator $A$ on 
${\mathcal F}_{\lambda}$, see Section \ref{sec:2}.
We have the following result, see \cite{CaTs}, \cite{CaTo}, \cite{AU1}.

\begin{proposition} \label{prop:intW} For each trace-class operator $A$ on ${\mathcal F}_{\lambda}$
and each $z \in {\mathbb C}^n$, we have
\begin{equation} \label{eq:intW}
W_0(A)(z)=2^n\int_{{\mathbb C}^n}k_A(w,2z-w)\exp \left({\lambda}\left(-z{\bar z}+z{\bar w}-\tfrac{1}{2}w{\bar w}\right)\right) d\mu_{\lambda}(w). 
\end{equation} and, equivalently, on a more symmetric form
\begin{equation} \label{eq:intWsym}
W_0(A)(z)=2^n\int_{{\mathbb C}^n}k_A(z+w,z-w)\exp \left(\tfrac{\lambda}{2}\left(-z{\bar z}-w{\bar w}+z{\bar w}-{\bar z}w\right)\right) d\mu_{\lambda}(w).\end{equation}
\end{proposition}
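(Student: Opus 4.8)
Here is how I would prove Proposition~\ref{prop:intW}.

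The plan is to compute $\Tr(A\Omega_0(z))$ by applying the operator $\Omega_0(z)$ to the coherent states $e_w$ and then using the ``coherent-state'' trace formula on ${\mathcal F}_\lambda$. First I would record the action of $\Omega_0(z)$ on a coherent state: substituting $f=e_w$ into \eqref{eq:om} and using $e_w(\zeta)=\exp(\lambda\bar w\zeta/2)$, a short computation (collecting the terms containing the output variable into the exponent of a new coherent state) yields
\[
\Omega_0(z)\,e_w=2^n\exp\!\bigl(\lambda(z\bar w-z\bar z)\bigr)\,e_{2z-w}.
\]
Thus $\Omega_0(z)$ carries $e_w$ to a scalar multiple of the coherent state attached to the reflected point $2z-w$; as consistency checks this collapses for $z=0$ to $R_0e_w=2^ne_{-w}$, and the same formula makes the self-adjointness of $\Omega_0(z)$ transparent.

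Next, since $\Omega_0(z)$ is $2^n$ times a unitary operator (being a conjugate of the self-adjoint unitary $2^{-n}R_0$), the product $A\Omega_0(z)$ is trace class, and I would invoke the trace formula: for any trace-class operator $B$ on ${\mathcal F}_\lambda$,
\[
\Tr(B)=\int_{{\mathbb C}^n}\langle Be_w,e_w\rangle_{{\mathcal F}_\lambda}\,e^{-\lambda|w|^2/2}\,d\mu_\lambda(w).
\]
This is a consequence of the identity $\langle f,g\rangle_{{\mathcal F}_\lambda}=\int_{{\mathbb C}^n}\langle f,e_w\rangle\langle e_w,g\rangle\,e^{-\lambda|w|^2/2}\,d\mu_\lambda(w)$ (which is nothing but the definition of the inner product together with the reproducing property): one expands $\Tr(B)=\sum_j\langle B\phi_j,\phi_j\rangle$ over an orthonormal basis $(\phi_j)$ of ${\mathcal F}_\lambda$, writes $e_w=\sum_j\overline{\phi_j(w)}\,\phi_j$, and interchanges summation with integration, a step legitimate for trace-class $B$; alternatively one may simply cite \cite{CaTs}, \cite{CaTo}, \cite{AU1}.

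Combining the two displays and using that $\langle Ae_{2z-w},e_w\rangle_{{\mathcal F}_\lambda}=k_A(w,2z-w)$ by the definition of the kernel, I obtain
\[
W_0(A)(z)=\Tr(A\Omega_0(z))=2^n\int_{{\mathbb C}^n}k_A(w,2z-w)\,\exp\!\bigl(\lambda(z\bar w-z\bar z)\bigr)\,e^{-\lambda|w|^2/2}\,d\mu_\lambda(w),
\]
which is exactly \eqref{eq:intW} once $e^{-\lambda|w|^2/2}$ is written as $\exp(-\tfrac{\lambda}{2}w\bar w)$. For the symmetric form \eqref{eq:intWsym} I would substitute $w\mapsto z+w$ --- a measure-preserving change since $d\mu_\lambda$ is a constant multiple of Lebesgue measure --- which sends $2z-w$ to $z-w$; expanding $-z\bar z+z\overline{(z+w)}-\tfrac12(z+w)\overline{(z+w)}$ and using the symmetry of the bilinear product (so that $w\bar z=\bar z w$) reduces the exponent to $\tfrac{\lambda}{2}(-z\bar z-w\bar w+z\bar w-\bar z w)$, giving \eqref{eq:intWsym}. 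The only genuinely nontrivial point is the trace formula above, and within its proof the interchange of summation and integration; everything else is elementary bookkeeping, where the single thing demanding care is the correct placement of the complex conjugations.
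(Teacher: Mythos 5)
Your argument is correct, and all the computations check out: the formula $\Omega_0(z)e_w=2^n\exp(\lambda(z\bar w-z\bar z))e_{2z-w}$, the coherent-state trace formula, the identification $\langle Ae_{2z-w},e_w\rangle=k_A(w,2z-w)$ with the paper's kernel convention, and the translation $w\mapsto z+w$ yielding the symmetric form. The paper itself gives no proof of this proposition (it only cites \cite{CaTs}, \cite{CaTo}, \cite{AU1}), so your write-up supplies a valid self-contained argument along the standard lines; the one step you rightly flag as nontrivial, the interchange of summation and integration in the trace formula, is indeed where the trace-class hypothesis is used.
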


These integral formulas allow us to extend $W_0$ to operators
on ${\mathcal F}_{\lambda}$  which are not necessarily trace-class, for instance Hilbert-Schmidt operators.  It is known that $W_0:{\mathcal L}_2({\mathcal F}_{\lambda})\rightarrow L^2({\mathbb C}^n,\mu_{\lambda})$ is the unitary part in the polar decomposition of $S_{\lambda}$, that is we have $S_{\lambda}=B_{\lambda}^{1/2}W_0$, see  \cite{Luo}, Theorem 6, \cite{CaRiv}, \cite{CaTs}. 

Now, with the aim of linking $W_0$ to the classical Weyl correspondence, we consider
another realization of the unitary irreducible representation of $H_n$ with central character
$(0,c)\rightarrow e^{i\lambda c}$, namely the Schr\"odinger representation $\rho'_{\lambda}$
defined on $L^2({\mathbb R}^n)$ by
\begin{equation*}(\rho'_{\lambda}((a+ib,a-ib),c)\phi)(x)
=\exp \left(i\lambda (c-bx+\tfrac{1}{2}ab)\right)\,\phi(x-a) \end{equation*}
for each $a, b, x \in {\mathbb R}^n$.

An (unitary) intertwining operator between $\rho_{\lambda}$ and $\rho'_{\lambda}$ is the Bargmann
transform $\mathcal B: L^2({\mathbb R}^n)\rightarrow {\mathcal F}_{\lambda}$  defined by
\begin{equation*}({\mathcal B}f)(z)=\left(\tfrac{\lambda}{\pi}\right)^{n/4}\,\int_{{\mathbb R}^n}\,\exp
\left(-\tfrac{\lambda}{4}z^2+\lambda zx-\tfrac{\lambda}{2}x^2\right)
\,\phi(x)\,dx,\end{equation*} 
see \cite{Fo}, \cite{CaRiv}.

Starting from the parity operator $R_1$ on $ L^2({\mathbb R}^n)$ defined by
\begin{equation*}(R_1\phi)(x)=2^n\phi(-x), \end{equation*}
we can define the Stratonovich-Weyl quantizer $\Omega_1$ on ${\mathbb R}^{2n}$ by
\begin{equation*}\Omega_1(a,b):=\rho'_{\lambda}((a+ib,a-ib),0)R_1\rho'_{\lambda}((a+ib,a-ib),0)^{-1} \end{equation*} or, equivalently, by
\begin{equation}(\Omega_1(a,b)\phi)(x)=2^n\exp \left(2i{\lambda}b(a-x)\right) \phi(2a-x) \end{equation}
for each $\phi \in L^2({\mathbb R}^n)$.
Then, for each trace-class operator $A$ on $L^2({\mathbb R}^n)$, we define the function 
$W_1(A)$ on ${\mathbb R}^{2n}$ by 
\begin{equation*} W_1(A)(x,y):=\Tr(A\Omega_1(x,y))\end{equation*}
for each $x, y \in {\mathbb R}^n$.

On the other hand, recall that the classical Weyl correspondence on ${\mathbb R}^{2n}$ is defined as follows \cite{Fo}, \cite{Ho1}.
For each function $f$ in the Schwartz space ${\mathcal S}({\mathbb
R}^{2n})$, we define the operator $ {\mathcal W}(f)$ acting on the Hilbert
space $L^2({\mathbb R}^{n})$ by \begin{equation}\label{eqcalWeyl} ({\mathcal W}(f)\phi)
(x)={(2\pi)}^{-n}\,\int_{{\mathbb R}^{2n}}\, e^{i yt} f( x+\tfrac{1}{2}y,
t)\,\phi (x+y)\,dy\,dt.
\end{equation}

Consider the Fourier transform ${\mathcal F}_2f$ of $f\in {\mathcal S}({\mathbb
R}^{2n})$ with respect to the second variable
\begin{equation*}({\mathcal F}_2f)(x,y)={(2\pi)}^{-n/2}\int_{{\mathbb R}^{n}}e^{-i yt
}f(x,t)\,dt.\end{equation*}
Then we can write
\begin{equation*}({\mathcal W}(f)\phi)
(x)={(2\pi)}^{-n/2}\int_{{\mathbb R}^{n}} ({\mathcal F}_2f)(\tfrac{1}{2}(x+y),x-y)\phi(y)dy\end{equation*}
and we can see that the kernel of ${\mathcal W}(f)$ is
\begin{equation*}k_{{\mathcal W}(f)}(x,y)={(2\pi)}^{-n/2}({\mathcal F}_2f)(\tfrac{1}{2}(x+y),x-y).\end{equation*}

\begin{lemma} For each $f\in {\mathcal S}({\mathbb
R}^{2n})$ such that ${\mathcal W}(f)$ is trace-class we have
\begin{equation*}\Tr(\Omega_1(a,b){\mathcal W}(f))=f(a,\lambda b)\end{equation*}
for each $a, b \in {\mathbb R}^{n}$.\end{lemma}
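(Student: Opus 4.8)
The plan is to compute the trace directly from the formula for the kernel of $\mathcal{W}(f)$ recalled just above the statement. First I would write down the (distributional) kernel of $\Omega_1(a,b)$: from $(\Omega_1(a,b)\phi)(x)=2^n\exp(2i\lambda b(a-x))\phi(2a-x)$ one reads off that $\Omega_1(a,b)$ has kernel $2^n\exp(2i\lambda b(a-x))\,\delta(y-2a+x)$. Composing with $\mathcal{W}(f)$, whose kernel $k_{\mathcal{W}(f)}(x,y)=(2\pi)^{-n/2}(\mathcal{F}_2f)(\tfrac12(x+y),x-y)$ is a Schwartz function on ${\mathbb R}^{2n}$ because $f\in\mathcal{S}({\mathbb R}^{2n})$, the operator $\Omega_1(a,b)\mathcal{W}(f)$ has kernel $2^n\exp(2i\lambda b(a-x))\,k_{\mathcal{W}(f)}(2a-x,y)$. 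Since $\mathcal{W}(f)$ is trace-class by hypothesis and $\Omega_1(a,b)$ is bounded, the trace is obtained by integrating this kernel over the diagonal $y=x$, giving
\[
\Tr(\Omega_1(a,b)\mathcal{W}(f))=2^n\int_{{\mathbb R}^n}\exp(2i\lambda b(a-x))\,k_{\mathcal{W}(f)}(2a-x,x)\,dx.
\]

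Next I would substitute $k_{\mathcal{W}(f)}(2a-x,x)=(2\pi)^{-n/2}(\mathcal{F}_2f)(a,2(a-x))$ and perform the change of variables $u=2(a-x)$ on ${\mathbb R}^n$. The Jacobian factor $2^{-n}$ cancels the prefactor $2^n$, the phase becomes $e^{i\lambda b u}$, and the integral collapses to $(2\pi)^{-n/2}\int_{{\mathbb R}^n}e^{i\lambda bu}(\mathcal{F}_2f)(a,u)\,du$. Finally, recognizing this as the inverse Fourier transform in the second variable of $\mathcal{F}_2f$, evaluated at $(a,\lambda b)$, Fourier inversion gives exactly $f(a,\lambda b)$, which is the claim.

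The argument is entirely elementary; the only things needing a little care are the bookkeeping of the competing normalization constants (the powers of $2$ from $\Omega_1$ and from the Jacobian, and the powers of $2\pi$ in $\mathcal{W}$ and in $\mathcal{F}_2$) and the justification that $\Tr(\Omega_1(a,b)\mathcal{W}(f))$ equals the integral of the kernel of the product over the diagonal — which is legitimate precisely because $f\in\mathcal{S}({\mathbb R}^{2n})$ makes $k_{\mathcal{W}(f)}$ a Schwartz kernel and $\mathcal{W}(f)$ is assumed trace-class. I do not expect any genuine obstacle beyond this routine verification.
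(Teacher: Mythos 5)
Your proposal is correct and follows essentially the same route as the paper: compute the kernel of $\Omega_1(a,b)\mathcal{W}(f)$, integrate it over the diagonal (the paper cites Mercer's theorem for this step), substitute the explicit kernel $k_{\mathcal{W}(f)}$ in terms of $\mathcal{F}_2f$, change variables $u=2(a-x)$, and conclude by Fourier inversion. All the normalization bookkeeping matches the paper's computation.
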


\begin{proof} Let $f\in {\mathcal S}({\mathbb
R}^{2n})$ and $a, b \in {\mathbb R}^{n}$. Then we have
\begin{align*} (\Omega_1(a,b){\mathcal W}(f)&\phi)(x)=
2^n\exp \left(2i{\lambda}b(a-x)\right)\,({\mathcal W}(f)\phi)(2a-x)\\
=&2^n\exp \left(2i{\lambda}b(a-x)\right)\,\int_{ {\mathbb R}^{n}}k_{{\mathcal W}(f)}(2a-x,y)\phi(y)\,dy.
\end{align*}
Thus the kernel of $\Omega_1(a,b){\mathcal W}(f)$ is 
\begin{equation*}k'(x,y):=2^n\exp \left(2i{\lambda}b(a-x)\right)k_{{\mathcal W}(f)}(2a-x,y)\end{equation*} and by Mercer's theorem we have
\begin{align*}\Tr(\Omega_1(a,b)&{\mathcal W}(f))=\int_{ {\mathbb R}^{n}}k'(x,x)\,dx\\
&=2^n\,\int_{ {\mathbb R}^{n}}\exp \left(2i{\lambda}b(a-x)\right)k_{{\mathcal W}(f)}
(2a-x,x)\,dx\\
&={(2\pi)}^{-n/2}2^n\,\int_{ {\mathbb R}^{n}}\exp \left(2i{\lambda}b(a-x)\right)({\mathcal F}_2f)(a,2a-2x)\,dx\\
&={(2\pi)}^{-n/2}\,\int_{ {\mathbb R}^{n}}\exp \left(i{\lambda}bx\right)({\mathcal F}_2f)(a,x)\,dx\\
&=f(a,\lambda b),
\end{align*}
by the Fourier inversion theorem.
\end{proof}

As an immediate consequence of this lemma, we get the following proposition.
\begin{proposition} Let $f\in {\mathcal S}({\mathbb R}^{2n})$ such that ${\mathcal W}(f)$
is trace-class. Then, for each $x,y \in {\mathbb R}^n$, we have
\begin{equation*}W_1({\mathcal W}(f))(x,y)=f(x,\lambda y).
\end{equation*}
\end{proposition}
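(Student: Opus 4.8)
The plan is to read off the statement from the preceding Lemma by a one-line application of the cyclicity of the trace. By the very definition of $W_1$ we have, for $x,y\in{\mathbb R}^n$,
\[
W_1({\mathcal W}(f))(x,y)=\Tr\bigl({\mathcal W}(f)\,\Omega_1(x,y)\bigr).
\]
Here ${\mathcal W}(f)$ is trace-class by hypothesis, while $\Omega_1(x,y)$ is a bounded operator on $L^2({\mathbb R}^n)$ — it is the product of the two unitaries $\rho'_{\lambda}((x+iy,x-iy),0)^{\pm1}$ and the bounded operator $R_1$, which has operator norm $2^n$. Hence the product ${\mathcal W}(f)\,\Omega_1(x,y)$ is again trace-class and the trace is cyclic on it, so that $\Tr\bigl({\mathcal W}(f)\,\Omega_1(x,y)\bigr)=\Tr\bigl(\Omega_1(x,y)\,{\mathcal W}(f)\bigr)$.

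Applying the Lemma with $a=x$ and $b=y$ then gives $\Tr\bigl(\Omega_1(x,y)\,{\mathcal W}(f)\bigr)=f(x,\lambda y)$, and combining the two displayed identities yields $W_1({\mathcal W}(f))(x,y)=f(x,\lambda y)$ pointwise in $(x,y)\in{\mathbb R}^{2n}$. There is essentially no obstacle here: the only point requiring a word of justification is the trace-class bookkeeping needed to invoke cyclicity, which is settled by the boundedness of $\Omega_1(x,y)$ noted above; all the analytic content has already been absorbed into the Lemma.
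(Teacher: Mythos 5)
Your proposal is correct and follows essentially the same route as the paper: the paper simply declares the proposition an ``immediate consequence'' of the preceding lemma, and the one step it leaves implicit --- passing from $\Tr(\Omega_1(x,y)\,{\mathcal W}(f))$ to $\Tr({\mathcal W}(f)\,\Omega_1(x,y))$ by cyclicity of the trace, licensed because ${\mathcal W}(f)$ is trace-class and $\Omega_1(x,y)$ is bounded --- is exactly the step you spell out. Nothing further is needed.
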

If, in particular,  we take $\lambda =1$, we see that $W_1$ and $\mathcal W$ are inverse
to each other. 

We can now specify the connection between $W_0$ and $W_1$, hence between $W_0$ and $\mathcal W$.
\begin{proposition} \label{propconnect} For each trace-class operator $A$ on $L^2({\mathbb R}^{n})$ and each
$a, b\in {\mathbb R}^{n}$, we have
\begin{equation*}W_1(A)(a,b)=W_0({\mathcal B}A{\mathcal B}^{-1})(a+ib).\end{equation*}
\end{proposition}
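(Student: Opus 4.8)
The plan is to reduce the stated identity to a statement purely about the Stratonovich--Weyl quantizers, namely that the Bargmann transform conjugates $\Omega_1$ into $\Omega_0$:
\begin{equation*}
\mathcal B\,\Omega_1(a,b)\,\mathcal B^{-1}=\Omega_0(a+ib),\qquad a,b\in{\mathbb R}^n.
\end{equation*}
Granting this, the proposition is immediate. Since $\mathcal B$ is unitary and $A$ is trace-class, $\mathcal BA\mathcal B^{-1}$ is trace-class, and the cyclicity of the trace (valid for a trace-class operator multiplied by bounded ones) gives
\[
\begin{aligned}
W_0(\mathcal BA\mathcal B^{-1})(a+ib)
&=\Tr\!\big(\mathcal BA\mathcal B^{-1}\,\Omega_0(a+ib)\big)
=\Tr\!\big(A\,\mathcal B^{-1}\Omega_0(a+ib)\mathcal B\big)\\
&=\Tr\!\big(A\,\Omega_1(a,b)\big)=W_1(A)(a,b).
\end{aligned}
\]

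To prove the intertwining of quantizers I would combine two ingredients. First, by its very definition the Bargmann transform intertwines the Schr\"odinger and Bargmann--Fock realizations, $\mathcal B\,\rho'_{\lambda}(h)=\rho_{\lambda}(h)\,\mathcal B$ for every $h\in H_n$. Writing $h=((a+ib,a-ib),0)$ and recalling that $\Omega_1(a,b)=\rho'_{\lambda}(h)R_1\rho'_{\lambda}(h)^{-1}$ while $\Omega_0(a+ib)=\rho_{\lambda}(h)R_0\rho_{\lambda}(h)^{-1}$, we get $\mathcal B\,\Omega_1(a,b)\,\mathcal B^{-1}=\rho_\lambda(h)\,(\mathcal BR_1\mathcal B^{-1})\,\rho_\lambda(h)^{-1}$, which reduces everything to the second ingredient, namely that $\mathcal B$ conjugates the two parity operators, $\mathcal B\,R_1\,\mathcal B^{-1}=R_0$. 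This last point is a one-line Gaussian computation: applying $\mathcal B$ to $R_1\phi$, using $(R_1\phi)(x)=2^n\phi(-x)$ and then changing variables $x\mapsto -x$ in the defining integral of $\mathcal B$, the weight $\exp(-\tfrac{\lambda}{4}z^2+\lambda zx-\tfrac{\lambda}{2}x^2)$ turns into $\exp(-\tfrac{\lambda}{4}(-z)^2+\lambda(-z)x-\tfrac{\lambda}{2}x^2)$, so that $(\mathcal BR_1\phi)(z)=2^n(\mathcal B\phi)(-z)=(R_0\mathcal B\phi)(z)$.

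There is no serious obstacle in this argument; the only point deserving a little care is the trace-class bookkeeping, but since $\mathcal B$ is unitary and the parity operators $R_0,R_1$ are bounded, each trace appearing above is the trace of a genuine trace-class operator and the cyclic rearrangements are legitimate. One could alternatively carry out the proof "by hand", starting from the kernel formula \eqref{eq:intW} for $W_0$ together with the integral kernel of $\mathcal B$, but the conjugation argument above is shorter and makes the geometric content transparent.
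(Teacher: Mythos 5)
Your proposal is correct and follows essentially the same route as the paper: both arguments rest on the intertwining relation ${\mathcal B}\rho'_{\lambda}(h)=\rho_{\lambda}(h){\mathcal B}$, the identity ${\mathcal B}R_1={R_0}{\mathcal B}$ (which the paper states without proof and you verify by the change of variables $x\mapsto -x$), and cyclicity of the trace. Packaging these as the single statement ${\mathcal B}\,\Omega_1(a,b)\,{\mathcal B}^{-1}=\Omega_0(a+ib)$ is only a cosmetic reorganization of the paper's computation.
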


\begin{proof} We can easily verify that ${\mathcal B}R_1=R_0{\mathcal B}$. Let
$A$ be a trace-class operator on $L^2({\mathbb R}^{n})$ and let
$a, b\in {\mathbb R}^{n}$. Recall that $\mathcal B$ intertwines $\rho_{\lambda}$ and 
$\rho'_{\lambda}$. Then we have
\begin{align*}
W_1(A)(a,b)=&\Tr(A\Omega_1(a,b))=\Tr(A\rho'_{\lambda}((a+ib,a-ib),0)R_1\rho'_{\lambda}((a+ib,a-ib),0)^{-1})\\
=&\Tr(A\rho'_{\lambda}((a+ib,a-ib),0){\mathcal B}^{-1}R_0{\mathcal B}\rho'_{\lambda}((a+ib,a-ib),0)^{-1})\\
=&\Tr(A{\mathcal B}^{-1}\rho_{\lambda}((a+ib,a-ib),0)R_0\rho_{\lambda}((a+ib,a-ib),0)^{-1}{\mathcal B})\\
=&\Tr ({\mathcal B}A{\mathcal B}^{-1}\Omega_0(a+ib))=W_0({\mathcal B}A{\mathcal B}^{-1})(a+ib).
\end{align*}
\end{proof}

Of course, Proposition \ref{propconnect} can be extended to operators which are not
necessarily of trace-class.

\section{The metaplectic representation} \label{sec:4}

Here we consider the group $S:=Sp(n,{\mathbb C})\cap SU(n,n)$
which is isomorphic to $Sp(n,{\mathbb R})$ via the map $M\rightarrow UMU^{-1}$
where $U:=\left(\begin{smallmatrix} I_n&iI_n\\
I_n&-iI_n\end{smallmatrix}\right)$, see \cite{Fo}, p. 175.
Then $S$
consists of all matrices
\begin{equation*}k=\begin{pmatrix} P&Q\\
{\bar Q}&{\bar P}
\end{pmatrix}, \quad P,Q\in M_n({\mathbb C}),\quad PP^{\ast}-QQ^{\ast}=I_n,\quad PQ^t=QP^t.
\end{equation*}
Note that we also have
\begin{equation*}P^{\ast}P-Q^t{\bar Q}=I_n,\quad \quad P^{\ast}Q=Q^t{\bar P}.
\end{equation*}
This implies, in particular, that $P^{-1}Q$ and ${\bar Q}P^{-1}$ are symmetric.

The group $S$ acts on ${\mathbb C}^n$ by 
\begin{equation*}k(z,{\bar z})=(Pz+Q{\bar z},{\bar Q}z+{\bar P}{\bar z})\end{equation*}
where $k=\left(\begin{smallmatrix} P&Q\\
{\bar Q}&{\bar P}\end{smallmatrix}\right)$. Then $\omega$ is invariant for the action of $S$,
that is, we have
\begin{equation*}\omega (k(z,{\bar z}),k(z',{\bar z'}))=\omega ((z,{\bar z}),(z',{\bar z'}))
\end{equation*} for each $z,z'\in {\mathbb C}^n$. We will denote
$kz=Pz+Q{\bar z}$.

Thus $S$ also acts on $H_n$ by $k\cdot ((z,{\bar z}),c)=(k(z,{\bar
z}),c)$. 

Let $\lambda >0$ and $\rho:=\rho_{\lambda}$. For each $k\in S$, we define
$\rho_k$ by $\rho_k(h):=\rho(k\cdot h)$ for each $h\in H_n$. Since $k\in S$ acts on $H_n$
as a group isomorphism, we see that $\rho_k$ is also a generic representation of $H_n$.
Moreover, for each $h=((0,0),c)$
in the center of $H_n$, we have $\rho_k(h)=\rho(h)$. Hence, by the Stone-von Neumann
theorem, $\rho_k$ and $\rho$ are unitarily equivalent, that is, there exists a unitary operator
$A_k$ of ${\mathcal F}_{\lambda}$ (defined up to a unit complex number) such that
\begin{equation}\label{eq:Ak}\rho_k(h)=A_k\rho (h)A_k^{-1}\end{equation}
for each $h\in H_n$. 

Now we express Eq. \ref{eq:Ak} in terms of kernels.

\begin{proposition} \label{prop:relatker1} Let $k\in S$. Then the operator $A_k$ 
on ${\mathcal F}_{\lambda}$ satisfies Eq. \ref{eq:Ak} if and only if
its kernel $b_k(z,w):=k_{A_k}(z,w)$ satisfies the relation
\begin{equation}\label{eq:relatker1}
\exp  \left( -\tfrac{\lambda}{4}\vert kz_0\vert^2+\tfrac{\lambda}{2}\overline{kz_0}z\right)b_k(z-kz_0,w)
=\exp \left( -\tfrac{\lambda}{4}\vert z_0\vert^2-\tfrac{\lambda}{2}\bar{w}z_0\right)
b_k(z, w+z_0)
\end{equation}
for each $z_0,z$ and $w$ in ${\mathbb C}^n$.
\end{proposition}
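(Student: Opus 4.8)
The plan is to turn the operator identity \eqref{eq:Ak} into an equivalent identity between the kernels of its two sides and then to evaluate those kernels directly from the Bargmann-Fock formulas of Section~\ref{sec:2}. Since $A_k$ is invertible, \eqref{eq:Ak} says exactly that $\rho_k(h)A_k=A_k\rho(h)$ for every $h\in H_n$. By the reproducing property $\langle Bf,e_z\rangle=(Bf)(z)$ together with the density in ${\mathcal F}_\lambda$ of the linear span of the coherent states $e_w$, a bounded operator $B$ is determined by the numbers $\langle Be_w,e_z\rangle$, $z,w\in{\mathbb C}^n$. Hence it suffices to show that
\[
\langle A_k\rho(h)e_w,e_z\rangle=\langle \rho_k(h)A_ke_w,e_z\rangle
\]
holds for all $h\in H_n$ and all $z,w\in{\mathbb C}^n$ if and only if $b_k$ satisfies \eqref{eq:relatker1}.

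Next I would compute the two matrix coefficients. Writing $h=((z_0,\bar z_0),c_0)$ and substituting $e_w$ into the formula for $\rho_\lambda(h)$ gives $\rho(h)e_w=\exp\bigl(i\lambda c_0-\tfrac{\lambda}{4}|z_0|^2-\tfrac{\lambda}{2}\bar w z_0\bigr)\,e_{w+z_0}$, so the right-hand side of the displayed identity is $\exp\bigl(i\lambda c_0-\tfrac{\lambda}{4}|z_0|^2-\tfrac{\lambda}{2}\bar w z_0\bigr)\,b_k(z,w+z_0)$. For the left-hand side I use that $\rho_k=\rho\circ(k\cdot\,)$ is again a unitary representation of $H_n$, so $\rho_k(h)^{*}=\rho_k(h^{-1})$ with $h^{-1}=((-z_0,-\bar z_0),-c_0)$; since $\rho_k(h)=\rho(k\cdot h)$ amounts to replacing $z_0$ by $kz_0$ in the formula for $\rho_\lambda$, the same substitution yields $\rho_k(h^{-1})e_z=\exp\bigl(-i\lambda c_0-\tfrac{\lambda}{4}|kz_0|^2+\tfrac{\lambda}{2}\,\bar z\,(kz_0)\bigr)\,e_{z-kz_0}$, and pairing $A_ke_w$ against this vector (using $\overline{\bar z\,(kz_0)}=\overline{kz_0}\,z$) gives $\langle \rho_k(h)A_ke_w,e_z\rangle=\langle A_ke_w,\rho_k(h^{-1})e_z\rangle=\exp\bigl(i\lambda c_0-\tfrac{\lambda}{4}|kz_0|^2+\tfrac{\lambda}{2}\,\overline{kz_0}\,z\bigr)\,b_k(z-kz_0,w)$.

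Equating the two expressions, the common factor $e^{i\lambda c_0}$ cancels and what is left is precisely \eqref{eq:relatker1}; conversely, \eqref{eq:relatker1} involves no $c_0$, so multiplying it by $e^{i\lambda c_0}$ restores the matrix-coefficient identity for each $h=((z_0,\bar z_0),c_0)$, and letting $z_0$ run over ${\mathbb C}^n$ reconstitutes $\rho_k(h)A_k=A_k\rho(h)$ for all $h$. The only point that demands genuine care --- and so the main (rather modest) obstacle --- is the bookkeeping with conjugations: here $zw$ denotes $\sum_j z_jw_j$ rather than the Hermitian product, so in commuting $A_k$ past $\rho_k(h)$ one must keep track that it is $kz_0$ (not $z_0$) and $\overline{kz_0}\,z$ (not $\bar z\,(kz_0)$) that appear; once this is handled each individual step is a one-line Fock-space manipulation.
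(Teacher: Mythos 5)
Your proof is correct, and it reaches the kernel identity by a mildly different route than the paper. The paper writes out the integral kernels of the two compositions explicitly: it applies the formula for $\rho_\lambda(k\cdot h)$ to the integral representation of $A_kf$ on one side, and on the other side performs the change of variables $w\mapsto w+z_0$ inside $\int b_k(z,w)\,(\rho_\lambda(h)f)(w)\,e^{-\lambda|w|^2/2}\,d\mu_\lambda(w)$, absorbing the shift into the Gaussian weight $e^{-\lambda|w+z_0|^2/2}$ before identifying kernels. You instead test both sides of $\rho_k(h)A_k=A_k\rho(h)$ against coherent states, using that $\rho_\lambda(h)e_w$ is a scalar multiple of $e_{w+z_0}$ together with unitarity ($\rho_k(h)^{*}=\rho_k(h^{-1})$) to move $\rho_k(h)$ onto $e_z$; since $k_{A_k}(z,w)=\langle A_ke_w,e_z\rangle$ and the coherent states span a dense subspace, this is an equivalent characterization. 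What your version buys is the elimination of the Gaussian change-of-variables bookkeeping (the scalar in $\rho_\lambda(h)e_w=\exp(i\lambda c_0-\tfrac{\lambda}{4}|z_0|^2-\tfrac{\lambda}{2}\bar wz_0)e_{w+z_0}$ already packages it); what the paper's version buys is that it never invokes the adjoint or unitarity of $\rho_k(h)$, only the defining integral formula for the kernel. Your tracking of the bilinear (non-Hermitian) pairing $zw=\sum_jz_jw_j$ and of the conjugation $\overline{\bar z\,(kz_0)}=\overline{kz_0}\,z$ is exactly right, and both directions of the equivalence are handled correctly.
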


\begin{proof} Let $k\in S$ and $h=((z_0,{\bar z}_0),0)\in H_n$. We have
\begin{equation*}(A_kf)(z)=\int_{{\mathbb C}^n}b_k(z, w)f(w)\,
e^{-\lambda \vert
w\vert^2/2}\,d\mu_{\lambda} (w).
\end{equation*} Then, on the one hand, we get
\begin{align*}\rho_{\lambda}(k\cdot h)(A_kf)(z)=&\exp  \left( -\tfrac{\lambda}{4}\vert kz_0\vert^2+\tfrac{\lambda}{2}\overline{kz_0}z\right)\\
\times & \int_{{\mathbb C}^n}b_k(z-kz_0, w)f(w)\,
e^{-\lambda \vert
w\vert^2/2}\,d\mu_{\lambda} (w).
\end{align*}
On the other hand, we have
\begin{equation*}(A_k\rho_{\lambda} (h)f)(z)=
 \int_{{\mathbb C}^n}b_k(z, w)\exp  \left( -\tfrac{\lambda}{4}\vert z_0\vert^2+\tfrac{\lambda}{2}\bar{z_0}w\right) f(w-z_0)\,
e^{-\lambda \vert
w\vert^2/2}\,d\mu_{\lambda} (w).
\end{equation*}
By performing the change of variables $w\rightarrow w+z_0$ in this integral and then writing
that the kernels of $\rho_{\lambda} (k\cdot h)A_k$ and $A_k\rho_{\lambda} (h)$ are the same, we obtained the desired equation.
\end{proof}

\section{Holomorphic representations of the Jacobi group} \label{sec:5}

In this section, we essentially follow \cite{CaPad2} in which we contructed the
holomorphic representations of the Jacobi group by applying the general method of 
\cite{Ne}, Chap. XII (see also \cite{Bern}, \cite{CaPad}).

The
(multi-dimensional) Jacobi group is the semi-direct product $G:=H_n\rtimes S$ with respect to the action of $S$ on $H_n$ introduced in Section \ref{sec:4}. The elements of $G$ can be written as $((z,{\bar
z}),c,k)$ where $z\in {\mathbb C}^n$, $c\in {\mathbb R}$ and $k\in
S$. The multiplication of $G$ is given by
\begin{equation*}((z,{\bar
z}),c,k)\cdot ((z',{\bar z'}),c',k')=((z,{\bar z})+k(z',{\bar
z'}),c+c'+\tfrac{1}{2}\omega ((z,{\bar z}),k(z',{\bar
z'})),kk').\end{equation*} The complexification $G^c$ of $G$ is then
the semi-direct product $G^c=H_n^c\rtimes Sp(n,{\mathbb C})$ whose elements can be written as $((z,w),c,k)$ where $z,w\in {\mathbb C}^n$, $c\in {\mathbb C}$, $k\in
Sp(n,{\mathbb C})$ and the
multiplication of $G^c$ is obtained by replacing ${\bar z}$ and
${\bar z'}$ by $w$ and $w'$ in the preceding formula. 

Let ${\mathfrak g}$ and ${\mathfrak
g}^c$ the Lie algebras of $G$ and $G^c$. For each
$X=\bigl((z,w),c,\bigl(\begin{smallmatrix} A&B\\
C&-A^t
\end{smallmatrix}\bigr)\bigr)\in {\mathfrak g}^c$ we define
\begin{equation*}X^{\ast}=((-{\bar w},-{\bar z}),-{\bar c},
\bigl(\begin{smallmatrix} {\bar A}^t&-{\bar C}\\
-{\bar B}&-{\bar A}
\end{smallmatrix}\bigr)).\end{equation*} 
We denote by $g\rightarrow g^{\ast}$ the
involutive anti-automorphism of $G^c$ which is obtained by
exponentiating $X\rightarrow X^{\ast}$ to $G^c$. 

Let $K$ be the subgroup of $G$ consisting of all
elements $\bigl((0,0),c,\bigl(\begin{smallmatrix} P&0\\
0&{\bar P}\end{smallmatrix}\bigr)\bigr)$ where $c\in {\mathbb R}$ and $P\in U(n)$.
Let $P^+$ and $P^-$ be the subgroups of $G$ defined by
\begin{equation*}P^+=\left\{\left((y,0),0,\begin{pmatrix} I_n&Y\\
0&I_n\end{pmatrix}\right)\,:\,y\in {\mathbb C}^n, Y\in M_n({\mathbb
C}),Y^t=Y\right\}\end{equation*} and
\begin{equation*}P^-=\left\{\left((0,v),0,\begin{pmatrix} I_n&0\\
V&I_n\end{pmatrix}\right)\,:\,v\in {\mathbb C}^n, V\in M_n({\mathbb
C}),V^t=V\right\}\end{equation*} 
and let ${\mathfrak p}^+$ and ${\mathfrak p}^-$ be the Lie algebras of 
$P^+$ and $P^-$. For convenience, we denote by $a(y,Y)$ the element $\bigl((y,0),0,\bigl(\begin{smallmatrix} 0&Y\\
0&0\end{smallmatrix}\bigr)\bigr)$ of ${\mathfrak p}^+$.

We can verify that each element
$g=\bigl((z_0,w_0),c_0,\bigl(\begin{smallmatrix} A&B\\
C&D\end{smallmatrix}\bigr)\bigr)\in G^c$ has a
$P^+K^cP^-$-decomposition if and only if
$\Det(D)\not= 0$ and, in this case, we have
\begin{equation*}g=\left((y,0),0,\begin{pmatrix} I_n&Y\\
0&I_n\end{pmatrix}\right)\cdot\left((0,0),c,\begin{pmatrix} P&0\\
0&(P^t)^{-1}\end{pmatrix}\right)\cdot\left((0,v),0,\begin{pmatrix} I_n&0\\
V&I_n\end{pmatrix}\right)\end{equation*} where $y=z_0-BD^{-1}w_0$,
$Y=BD^{-1}$, $v=D^{-1}w_0$, $V=D^{-1}C$, $P=A-BD^{-1}C=(D^t)^{-1}$
and $c=c_0-\tfrac{i}{4}(z_0-BD^{-1}w_0)w_0$. We then denote by
$\zeta:\, P^+K^cP^-\rightarrow P^+$ and $\kappa:\, P^+K^cP^-\rightarrow
K^c$  the projections onto $P^+$- and $K^c$-components.

Consider the action (defined almost everywhere) of $G^c$ on ${\mathfrak p}^+$ defined as follows.
For $Z\in {\mathfrak p}^+$ and
$g\in G^c$ with $g\exp Z \in P^+K^cP^-$, we define the element
$g\cdot Z$ of ${\mathfrak p}^+$ by $g\cdot Z:=\log \zeta (g\exp Z)$.
We can verify that
the action of $g=\bigl((z_0,w_0),c_0,\bigl(\begin{smallmatrix} A&B\\
C&D\end{smallmatrix}\bigr)\bigr)\in G^c$ on $a(y,Y)\in {\mathfrak
p}^+$ is given by $g\cdot a(y,Y)=a(y',Y')$ where
$Y'=(AY+B)(CY+D)^{-1}$ and
\begin{equation*}y'=z_0+Ay-(AY+B)(CY+D)^{-1}(w_0+Cy). \end{equation*}
Consequently, we have
\begin{equation*}{\mathcal D}:=G\cdot 0=\{a(y,Y)\in {\mathfrak
p}^+\,:\,I_n-Y{\bar Y}>0\}\cong {\mathbb C}^n\times {\mathcal B}.\end{equation*}
where ${\mathcal B}:=\{Y\in M_n({\mathbb C})\,:Y^t=Y,\,I_n-Y{\bar Y}>0\}$.

Let $\chi$ be a unitary character 
of $K$ whose extension to $K^c$ is also denoted by $\chi$.
Following \cite{Ne}, we introduce the functions $K_{\chi}(Z,W):=\chi ( \kappa (\exp W^{\ast}\exp Z))^{-1}$ for
$Z,\,W\in {\mathcal D}$ and $J_{\chi}(g,Z):=\chi(\kappa (g\exp Z))$
for $g\in G$ and $Z\in {\mathcal D}$. We consider the Hilbert space
${\mathcal H}_{\chi}$ of all holomorphic functions $f$ on $\mathcal D$ such
that
\begin{equation*}\Vert f\Vert
_{\chi}^2:=\int_{\mathcal D}\,\vert f(Z)\vert ^2\,
K_{\chi}(Z,Z)^{-1}c_{\chi}d\mu (Z)<+\infty. \end{equation*}
Here the $G$-invariant measure $\mu$ on $\mathcal D$ is defined by
\begin{equation*}d\mu(Z)=\Det (1-Y{\bar Y})^{-(n+2)}\,d\mu_L(y,Y),\end{equation*} 
where $d\mu_L$ is the Lebesgue measure on $\mathcal D\cong {\mathbb C}^n\times {\mathcal B}$, see \cite{Ne}, p. 538, and 
the constant $c_{\chi}$ is defined by
\begin{equation*}c_{\chi}^{-1}=\int_{\mathcal
D}\,K_{\chi}(Z,Z)^{-1}\,d\mu(Z).\end{equation*}

Let us fix $\chi$ as follows. Let $\lambda >0$ and $m\in {\mathbb Z}$. Then,
for each $k=\bigl((0,0),c,\bigl(\begin{smallmatrix} P&0\\
0&{\bar P}
\end{smallmatrix}\bigr)\bigr)\in K$, we set $\chi(k)=e^{i\lambda c}(\Det P)^{m}$.

\begin{proposition} \label{prop:Jac} {\rm \cite{Ne}, \cite{CaPad2}} \begin{enumerate} \item Let $Z=a(y,Y)\in {\mathcal D}$ and $W=a(v,V)\in {\mathcal D}$. 
We have \begin{align*}K_{\chi}(Z,&W)=\Det (I_n-Y{\bar V})^{m}\\
&\times \exp \left( \tfrac{\lambda}{4}\left(2y(I_n-{\bar V}Y)^{-1}{\bar v}+y(I_n-{\bar V}Y)^{-1}{\bar V}y+{\bar v}Y(I_n-{\bar V}Y)^{-1}{\bar v}\right)\right). \end{align*}

\item ${\mathcal
H}_{\chi}\not= (0)$ if and only if $m+n+1/2<0$. In this case, ${\mathcal
H}_{\chi}$ contains the polynomials.

\item For each $g=\bigl((z_0,{\bar z_0}),c_0,\bigl(\begin{smallmatrix} P&Q\\
{\bar Q}&{\bar P}\end{smallmatrix}\bigr)\bigr)\in G$ and each $Z=a(y,Y)\in {\mathcal D}$, we have
\begin{align*}J(&g, Z)=e^{i\lambda c_0}\Det ({\bar Q}Y+{\bar P})^{-m}\\
&\times \exp \left(\tfrac{\lambda}{4}\left(z_0{\bar z_0}+2{\bar z_0}Py+yP^t{\bar Q}y-({\bar z_0}+{\bar Q}y)(PY+Q)({\bar Q}Y+{\bar P})^{-1}({\bar z_0}+{\bar Q}y\right)
\right)\\ \end{align*}
\end{enumerate} \end{proposition}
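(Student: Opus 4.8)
The plan is to prove Proposition \ref{prop:Jac} by direct computation from the definitions of $K_\chi$, $J_\chi$ and $\chi$, using the explicit $P^+K^cP^-$-decomposition formula established earlier in Section \ref{sec:5}. The three parts are essentially independent, so I would treat them in turn.

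For part (1), I would start from $K_{\chi}(Z,W)=\chi(\kappa(\exp W^\ast \exp Z))^{-1}$. First I would compute $(\exp W)^\ast$: since $W=a(v,V)\in{\mathfrak p}^+$ corresponds to the group element $\bigl((v,0),0,\bigl(\begin{smallmatrix} I_n&V\\ 0&I_n\end{smallmatrix}\bigr)\bigr)\in P^+$, applying the anti-automorphism $g\mapsto g^\ast$ (whose infinitesimal form $X\mapsto X^\ast$ is given in the excerpt) sends it into $P^-$; one gets an element of the form $\bigl((0,-\bar v),0,\bigl(\begin{smallmatrix} I_n&0\\ -\bar V&I_n\end{smallmatrix}\bigr)\bigr)$ up to keeping careful track of the Heisenberg component and the factor of $i$ in $\omega$. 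Then I would form the product $(\exp W)^\ast\exp Z$ as a matrix in $G^c$, read off its $\bigl(\begin{smallmatrix} A&B\\ C&D\end{smallmatrix}\bigr)$ block (which will involve $I_n-\bar V Y$ in the lower-right position, whence the determinant hypothesis $I_n-Y\bar Y>0$ guarantees invertibility on $\mathcal D$), apply the decomposition formulas $P=(D^t)^{-1}$ and $c=c_0-\tfrac{i}{4}(z_0-BD^{-1}w_0)w_0$ to extract the $K^c$-component, and finally evaluate $\chi^{-1}$ on it. The $\Det(I_n-Y\bar V)^m$ factor comes from $(\Det P)^{-m}$, and the Gaussian exponential comes from the Heisenberg-part contribution $e^{i\lambda c}$ together with bookkeeping of the $\tfrac{i}{4}$ term; matching this against the stated symmetric-looking quadratic form in $y,\bar v$ is where the bulk of the algebra lies.

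For part (2), I would use that $\mathcal H_\chi$ is a reproducing kernel Hilbert space whose kernel is (a constant multiple of) $K_\chi$; nonvanishing of $\mathcal H_\chi$ is equivalent to $c_\chi^{-1}=\int_{\mathcal D}K_\chi(Z,Z)^{-1}\,d\mu(Z)$ being finite, i.e. to convergence of the integral $\int \Det(I_n-Y\bar Y)^{-m-n-2+\varepsilon}$-type expression against Lebesgue measure on $\mathbb C^n\times\mathcal B$. Integrating out the $\mathbb C^n$ (Gaussian) variable $y$ first contributes an extra $\Det(I_n-Y\bar Y)$-power, and then the standard convergence criterion for integrals of $\Det(I_n-Y\bar Y)^{s}$ over the symmetric bounded domain $\mathcal B$ gives the threshold; this is exactly the computation in \cite{Ne}, Chap. XII, so I would cite it and just verify the exponent arithmetic yields $m+n+1/2<0$. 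That the polynomials lie in $\mathcal H_\chi$ then follows because $K_\chi(Z,Z)^{-1}$ decays like a power of $\Det(I_n-Y\bar Y)$ times a Gaussian in $y$, which dominates any polynomial.

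For part (3), I would compute $J(g,Z)=\chi(\kappa(g\exp Z))$ for $g=\bigl((z_0,\bar z_0),c_0,\bigl(\begin{smallmatrix} P&Q\\ \bar Q&\bar P\end{smallmatrix}\bigr)\bigr)\in G$ and $Z=a(y,Y)$: multiply out the matrix product $g\exp Z$ in $G^c$, whose lower-right block is $\bar Q Y+\bar P$ (invertible on $\mathcal D$ by the $S$-relations), apply $P=(D^t)^{-1}$ to get $\Det(\bar Q Y+\bar P)^{-m}$, and track the Heisenberg component through the semidirect-product multiplication law plus the $c=c_0-\tfrac{i}{4}(\cdots)$ correction to produce $e^{i\lambda c_0}$ times the displayed quartic-looking exponential. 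The identities $P^\ast Q=Q^t\bar P$ etc. from Section \ref{sec:4} will be needed to rewrite intermediate expressions into the stated form. The main obstacle throughout is purely organizational: correctly propagating the central $c$-variable and the factors of $i$ from $\omega((z,\bar z),(z',\bar z'))=\tfrac{i}{2}(zw'-z'w)$ through the $P^+K^cP^-$ factorization, so I would set up consistent matrix notation for $G^c$ once and reuse it for all three parts rather than recomputing decompositions.
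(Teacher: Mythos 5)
The paper itself does not prove this proposition: it is quoted from \cite{Ne} and \cite{CaPad2}, so there is no in-text argument to compare against. Your plan is the natural reconstruction of the proof from the machinery the paper sets up in Section \ref{sec:5}, and its structural steps check out. In part (1), applying $X\mapsto X^{\ast}$ to $a(v,V)$ does give $\bigl((0,-{\bar v}),0,\bigl(\begin{smallmatrix} 0&0\\ -{\bar V}&0\end{smallmatrix}\bigr)\bigr)$, the product $(\exp W)^{\ast}\exp Z$ has lower-right block $D=I_n-{\bar V}Y$ (invertible on $\mathcal D$ since $\Vert Y\Vert,\Vert V\Vert<1$), and $P=(D^t)^{-1}=(I_n-Y{\bar V})^{-1}$ by symmetry of $Y$ and $V$, so $\chi(\kappa(\cdot))^{-1}$ indeed produces $\Det(I_n-Y{\bar V})^{m}$; the exponential then comes from $e^{-i\lambda c}$ with $c$ assembled from the semidirect-product cocycle $\tfrac12\omega$ and the correction $-\tfrac{i}{4}(z_0-BD^{-1}w_0)w_0$, exactly as you describe. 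In part (3) the block $D={\bar Q}Y+{\bar P}$ and the factor $\Det({\bar Q}Y+{\bar P})^{-m}$ are likewise correct. Two caveats. First, what you give is a plan rather than a proof: the entire content of (1) and (3) is the Gaussian/central-variable bookkeeping you defer, and experience with these computations shows the factors of $i$ and $\tfrac14$ are exactly where errors creep in, so the claim is not established until that algebra is actually carried out and matched term by term against the displayed quadratic forms. Second, for part (2) your reduction to convergence of $c_{\chi}^{-1}$ is the right criterion in Neeb's framework, but the exponent count (the power of $\Det(I_n-Y{\bar Y})$ contributed by the Gaussian integration in $y$, combined with the $-(n+2)$ from $d\mu$ and the Hua-type convergence threshold for the domain of symmetric matrices) is delicate and must genuinely be verified to land on $m+n+1/2<0$; asserting that "the exponent arithmetic yields" the stated bound is precisely the step one cannot wave at, since a priori several plausible-looking thresholds (e.g.\ ones involving $n/2$ instead of $n$) agree at $n=1$ and differ for larger $n$. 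With those computations supplied, your route is the one the cited references take.
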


We assume now that $m+n+1/2<0$. Then the formula
\begin{equation*}\pi_{\chi}(g)f(Z)=J_{\chi}(g^{-1},Z)^{-1}\,f(g^{-1}\cdot
Z) \end{equation*}  defines a unitary representation of $G$ on
${\mathcal H}_{\chi}$, \cite{Ne}, p. 540.

Note that ${\mathcal H}_{\chi}$ is a reproducing kernel
Hilbert space. Indeed, if we set $e_Z(W):=K_{\chi}(W,Z)$ then we have we
have the reproducing property $f(Z)=\langle f,e_Z\rangle_{\chi}$ for
each $f\in {\mathcal H}_{\chi}$ and each $Z\in {\mathcal D}$
\cite{Ne}, p. 540. Here $\langle\cdot,\cdot\rangle_{\chi}$ is
the inner product on ${\mathcal H}_{\chi}$.

By using the coherent states $(e_Z)_{Z\in {\mathcal D}}$, we can define the Berezin symbol
$S_{\chi}(A)(Z)$ and the double Berezin symbol $s_{\chi}(A)(Z,W)$ of an operator $A$
on ${\mathcal H}_{\chi}$ as this was done for operators on ${\mathcal F}_{\lambda}$
in Section \ref{sec:2}. We can also verify that the kernel $k_A(Z,W)$ of $A$ satisfying
\begin{equation*}f(Z) =\int_{\mathcal D}\,k_A(Z,W)f(W)
K_{\chi}(W,W)^{-1}c_{\chi}d\mu (W)\end{equation*}
for each $f\in {\mathcal H}_{\chi}$ is given by
\begin{equation*}k_A(Z,W)=\langle A\,e_W\,,\,e_Z\rangle_{\chi}=s_{\chi}(A)(Z,W)
\langle e_W\,,\,e_Z\rangle_{\chi}. \end{equation*}

\section{Kernels of metaplectic operators} \label{sec:6}

By Proposition \ref{prop:Jac}, we have
\begin{equation*}(\pi_{\chi}((z_0,\bar{z_0}),0,I_{2n})f)(Z)=
\exp \left( \tfrac{\lambda}{4}\left(-\vert z_0\vert^2+2\bar{z_0}y+\bar{z_0}Y\bar{z_0}\right)
\right)\,f(a(y-z_0+Y\bar{z_0},Y))
\end{equation*}
for $Z=a(y,Y)\in {\mathcal D}$, $z_0\in {\mathbb C}^n$ and $f\in {\mathcal H}_{\chi}$.

Since this formula for $\pi_{\chi}((z_0,\bar{z_0}),0,I_{2n})$ is close to that of $\rho_{\lambda}
((z_0,\bar{z_0}),0)$, see Section \ref{sec:2}, a natural idea is then to use the commutation relation
\begin{equation*}\pi_{\chi}(k(z_0,\bar{z_0}),0,I_{2n})\pi_{\chi}((0,0),0,k)=\pi_{\chi}((0,0),0,k)\pi_{\chi}((z_0,\bar{z_0}),0,I_{2n})
\end{equation*}
for $k\in S$ in order to find a solution $b_k(z,w)$ of Eq. \ref{eq:relatker1}.

\begin{proposition} \label{prop:relatker2} For $k\in S$, let $B_k(Z,W)$ be the kernel
of $\pi_{\chi}((0,0),0,k)$. Then 
\begin{equation*}b_k(z,w):= B_k(a(0,z),a(0,w))\end{equation*}
 is a solution of Eq. \ref{eq:relatker1}.
\end{proposition}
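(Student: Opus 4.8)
The plan is to show that the function $b_k(z,w):=B_k(a(0,z),a(0,w))$ satisfies the functional equation \eqref{eq:relatker1}, and the starting point is the commutation relation
\begin{equation*}
\pi_{\chi}(k(z_0,\bar z_0),0,I_{2n})\,\pi_{\chi}((0,0),0,k)=\pi_{\chi}((0,0),0,k)\,\pi_{\chi}((z_0,\bar z_0),0,I_{2n}),
\end{equation*}
which holds because in $G$ one has $(k(z_0,\bar z_0),0,I_{2n})\cdot(0,0,k)=(0,0,k)\cdot((z_0,\bar z_0),0,I_{2n})$ (both equal $(k(z_0,\bar z_0),0,k)$ after using $\omega(k(z_0,\bar z_0),k\cdot 0)=0$). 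I would first translate this operator identity into an identity between integral kernels: the kernel of a product is obtained by the usual composition formula against the measure $K_{\chi}(W,W)^{-1}c_{\chi}d\mu(W)$, but since all the operators involved other than $\pi_{\chi}((0,0),0,k)$ are given by the explicit ``translation-type'' formula from Proposition \ref{prop:Jac} (applied as at the start of Section \ref{sec:6}), the composition reduces to a substitution in the arguments together with a multiplicative exponential factor. Concretely, using
\begin{equation*}
(\pi_{\chi}((z_0,\bar z_0),0,I_{2n})f)(a(y,Y))=\exp\!\left(\tfrac{\lambda}{4}(-|z_0|^2+2\bar z_0 y+\bar z_0 Y\bar z_0)\right) f\bigl(a(y-z_0+Y\bar z_0,Y)\bigr),
\end{equation*}
I obtain the kernel of $\pi_{\chi}((0,0),0,k)\,\pi_{\chi}((z_0,\bar z_0),0,I_{2n})$ as
\begin{equation*}
\exp\!\left(\tfrac{\lambda}{4}(-|z_0|^2+2\bar z_0 v+\bar z_0 V\bar z_0)\right) B_k\bigl(Z,\,a(v-z_0+V\bar z_0,V)\bigr)
\end{equation*}
for $W=a(v,V)$, and a similar expression (with the left translation acting on the $Z$-variable, using $k(z_0,\bar z_0)=kz_0$) for the other side.

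Next I would specialize both sides to $Y=V=0$, i.e. evaluate at $Z=a(0,z)$ and $W=a(0,w)$, which is exactly where $b_k$ lives. On the right-hand side this immediately produces $\exp(\tfrac{\lambda}{4}(-|z_0|^2+2\bar z_0 w))\,b_k(z,w-z_0)$; after the change of variable $w\mapsto w+z_0$ this becomes the right-hand side of \eqref{eq:relatker1} up to checking the exponents match, which they do since $\tfrac{\lambda}{4}(-|z_0|^2) - \tfrac{\lambda}{2}\bar w z_0$ is precisely what appears after absorbing $\tfrac{\lambda}{4}\cdot 2\bar z_0 w$ with $w\to w+z_0$ and noting $\bar z_0(w+z_0)=\bar z_0 w+|z_0|^2$. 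The left-hand side requires the analogous computation with the ``first-slot'' translation $\pi_{\chi}(kz_0,\overline{kz_0},0,I_{2n})$; here the subtlety is that this operator, being a left multiplication rather than appearing under $g^{-1}$, acts on the kernel through its argument $Z$, and one must use that for $Z=a(y,Y)$ the relevant substitution sends the first slot $y\mapsto y-kz_0+Y\overline{kz_0}$ with prefactor $\exp(\tfrac{\lambda}{4}(-|kz_0|^2+2\overline{kz_0}y+\overline{kz_0}Y\overline{kz_0}))$, which at $Y=0$ gives $\exp(\tfrac{\lambda}{4}(-|kz_0|^2+2\overline{kz_0}z))\,b_k(z-kz_0,w)$. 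Matching the two evaluated sides of the commutation relation then yields \eqref{eq:relatker1} verbatim.

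The main obstacle I anticipate is bookkeeping rather than anything conceptual: one must be careful about whether each factor $\pi_{\chi}(\cdot)$ enters the kernel composition with its argument $g$ or $g^{-1}$ (the representation is $\pi_{\chi}(g)f(Z)=J_{\chi}(g^{-1},Z)^{-1}f(g^{-1}\cdot Z)$), and correspondingly whether the substitution and exponential prefactor act on the $Z$-slot or the $W$-slot of $B_k$. A convenient way to sidestep sign errors is to verify the inverse formula $\pi_{\chi}((-z_0,-\bar z_0),0,I_{2n})=\pi_{\chi}((z_0,\bar z_0),0,I_{2n})^{-1}$ directly from the explicit formula and then apply the commutation relation in the form that places the simplest translation on each side. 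One should also note that, although $B_k$ is only defined a priori through the $P^+K^cP^-$-decomposition, the points $a(0,z)$ and the translated points $a(v-z_0+V\bar z_0,V)$ all lie in $\mathcal D$ (or at least in the domain where the kernel extends holomorphically), so the evaluation at $Y=V=0$ is legitimate; since $b_k$ and both sides of \eqref{eq:relatker1} are holomorphic in $z$ and antiholomorphic in $w$, it in fact suffices to check the identity on a real form, which removes any residual worry about domains.
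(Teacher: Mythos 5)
Your overall strategy is exactly the paper's: use the commutation relation between $\pi_{\chi}((0,0),0,k)$ and the translation operators $\pi_{\chi}((z_0,\bar z_0),0,I_{2n})$, pass to kernels, and specialize to $Y=V=0$ (note that the first slot of $a(\cdot,\cdot)$ is the vector and the second the matrix, so the points in question are $a(z,0)$, $a(w,0)$; you follow the statement's notation $a(0,z)$ but clearly mean $Y=V=0$, which is right). The left-hand side of the commutation relation is handled correctly. The genuine gap is on the right-hand side: your formula for the kernel of $\pi_{\chi}((0,0),0,k)\,\pi_{\chi}((z_0,\bar z_0),0,I_{2n})$ is wrong, and the ``exponent matching'' you assert does not hold. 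The kernel of an operator here is taken relative to the measure $K_{\chi}(W,W)^{-1}c_{\chi}\,d\mu(W)$; to extract the kernel of the composition from
\begin{equation*}
\int_{\mathcal D} B_k(Z,W)\,\exp\left(\tfrac{\lambda}{4}\left(-|z_0|^2+2\bar z_0 v+\bar z_0 V\bar z_0\right)\right) f\bigl(a(v-z_0+V\bar z_0,V)\bigr)\,K_{\chi}(W,W)^{-1}c_{\chi}\,d\mu(W)
\end{equation*}
you must perform the substitution $v\mapsto v+z_0-V\bar z_0$. This (i) puts $a(v+z_0-V\bar z_0,V)$ --- not $a(v-z_0+V\bar z_0,V)$ --- into the second slot of $B_k$, so at $V=0$ one gets $b_k(z,w+z_0)$ rather than your $b_k(z,w-z_0)$; (ii) transports the exponential multiplier to the shifted point, turning it into $\exp(\tfrac{\lambda}{4}(|z_0|^2+2\bar z_0 v-\bar z_0 V\bar z_0))$; and (iii), crucially, produces the ratio $K_{\chi}(a(v+z_0,0),a(v+z_0,0))^{-1}/K_{\chi}(a(v,0),a(v,0))^{-1}=\exp\bigl(-\tfrac{\lambda}{2}(|z_0|^2+\bar v z_0+\bar z_0 v)\bigr)$, because the weight $K_{\chi}(W,W)^{-1}$ in the reproducing measure is not translation invariant (only $d\mu$ is $G$-invariant).

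Your computation omits (iii) entirely and gets (i)--(ii) backwards; as a consequence your right-hand side evaluates to $\exp(\tfrac{\lambda}{4}|z_0|^2+\tfrac{\lambda}{2}\bar z_0 w)\,b_k(z,w)$ after your change of variable, which is not the required $\exp(-\tfrac{\lambda}{4}|z_0|^2-\tfrac{\lambda}{2}\bar w z_0)\,b_k(z,w+z_0)$ --- the argument of $b_k$ is off by $z_0$, the sign of the $|z_0|^2$ term is wrong, and $\bar z_0 w$ and $\bar w z_0$ are complex conjugates, not negatives, so no rearrangement repairs the mismatch. The discrepancy is precisely the factor in (iii). Once all three corrections are made, the exponents combine as $\tfrac{\lambda}{4}(|z_0|^2+2\bar z_0 v)-\tfrac{\lambda}{2}(|z_0|^2+\bar v z_0+\bar z_0 v)=-\tfrac{\lambda}{4}|z_0|^2-\tfrac{\lambda}{2}\bar v z_0$ and Eq. \ref{eq:relatker1} follows, which is exactly how the paper's proof proceeds.
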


\begin{proof} Let $k\in S$ and $Z=a(y,Y)\in  {\mathcal D}$. For each 
$f\in {\mathcal H}_{\chi}$, we have
\begin{equation*}(\pi_{\chi}((0,0),0,k)f)(Z)=
\int_{\mathcal D}\,B_k(Z,W)f(W)
K_{\chi}(W,W)^{-1}c_{\chi}d\mu (W).\end{equation*}

Then, on the one hand, we have
\begin{align*}(\pi_{\chi}(k(z_0,\bar{z_0}),0,I_{2n})&\pi_{\chi}((0,0),0,k)f)(Z)=
\exp \left(  \tfrac{\lambda}{4}\left(-\vert kz_0\vert^2+2(\overline{kz_0})y+
(\overline{kz_0})Y(\overline{kz_0})\right)\right)\\ 
&\times \int_{\mathcal D}\,B_k(a(y-kz_0+Y(\overline{kz_0}),Y),W)f(W)
K_{\chi}(W,W)^{-1}c_{\chi}d\mu (W). \end{align*}

On the other hand, we have
\begin{align*}&(\pi_{\chi}((0,0),0,k)\pi_{\chi}((z_0,\bar{z_0}),0,I_{2n})f)(Z)\\
&= \int_{\mathcal D}\,B_k(Z,W)(\pi_{\chi}((z_0,\bar{z_0}),0,I_{2n})f)(W)
K_{\chi}(W,W)^{-1}c_{\chi}d\mu (W)\\
&=\int_{\mathcal D}\,B_k(Z,W)
\exp \left(  \tfrac{\lambda}{4}\left(-\vert z_0\vert^2+2\bar{z_0}v+
\bar{z_0}V(\bar{z_0})\right)\right)\\
& \times
f(a(v-z_0+V\bar{z_0},V))
K_{\chi}(W,W)^{-1}c_{\chi}d\mu (W)
\end{align*}
with the notation $W=a(v,V)$. We perform in this integral the change of variables
$v\rightarrow v+z_0-V\bar{z_0}$ and we find
\begin{align*}&(\pi_{\chi}((0,0),0,k)\pi_{\chi}((z_0,\bar{z_0}),0,I_{2n})f)(Z)\\
&=\int_{\mathcal D}\,B_k(Z,a(v+z_0-V\bar{z_0},V))
\exp \left(  \tfrac{\lambda}{4}\left(\vert z_0\vert^2+2\bar{z_0}v-
\bar{z_0}V\bar{z_0}\right)\right)f(W)\\
&\times K_{\chi}(a(v+z_0-V\bar{z_0},V),a(v+z_0-V\bar{z_0},V))^{-1}c_{\chi}d\mu (W).
\end{align*}
Then, writing that
$\pi_{\chi}(k(z_0,\bar{z_0}),0,I_{2n})\pi_{\chi}((0,0),0,k)$ and $\pi_{\chi}((0,0),0,k)\pi_{\chi}((z_0,\bar{z_0}),0,I_{2n})$ have the same kernel, we obtain 
\begin{align*}
&\exp \left(  \tfrac{\lambda}{4}\left(-\vert kz_0\vert^2+2(\overline{kz_0})y+
(\overline{kz_0})Y(\overline{kz_0})\right)\right)\, B_k(a(y-kz_0+Y(\overline{kz_0}),Y),W)
K_{\chi}(W,W)^{-1}\\
&=\exp \left(  \tfrac{\lambda}{4}\left(\vert z_0\vert^2+2\bar{z_0}v-
\bar{z_0}V\bar{z_0}\right)\right)B_k(a(y,Y),a(v+z_0-V\bar{z_0},V))\\
&\times  K_{\chi}(a(v+z_0-V\bar{z_0},V),a(v+z_0-V\bar{z_0},V))^{-1}.
\end{align*}
Note that $K_{\chi}(a(v,0),a(v,0))=\exp\left(\tfrac{\lambda}{2}\vert v \vert^2\right)$.
Then, taking $Y=V=0$ in the above equality we get
\begin{align*}
&\exp \left(  \tfrac{\lambda}{4}\left(-\vert kz_0\vert^2+2(\overline{kz_0})y-2
\vert v \vert^2\right)\right)\, B_k(a(y-kz_0,0),a(v,0))\\
&=\exp \left(  \tfrac{\lambda}{4}\left(\vert z_0\vert^2+2\bar{z_0}v-
2\vert v+z_0\vert^2\right)\right)\,B_k(a(y,0),a(v+z_0,0)),
\end{align*} hence the desired result.
\end{proof}

The next step is then to compute $B_k(a(y,0),a(v,0))$.

\begin{proposition} \label{prop:exprBk} Let $Z=a(y,0)$ and $W=a(v,0)$. Then we have
\begin{equation}\label{eq:Bk} B_k(Z,W)=(\Det P)^m\exp \left(\tfrac{\lambda}{4}\left(
y({\bar Q}P^{-1}y)+2y(P^t)^{-1}{\bar v}-{\bar v}(P^{-1}Q{\bar v})\right)\right).
\end{equation}
\end{proposition}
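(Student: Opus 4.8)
The plan is to read off a closed formula for the kernel $B_k$ directly from the definition of the holomorphic representation and then specialize it. Put $g:=((0,0),0,k)\in G$. By the reproducing property $f(Z)=\langle f,e_Z\rangle_\chi$, together with $e_W(X)=K_\chi(X,W)$ and the defining identity $\pi_\chi(g)f(Z)=J_\chi(g^{-1},Z)^{-1}f(g^{-1}\cdot Z)$, we get
\begin{equation*}
B_k(Z,W)=\langle\pi_\chi(g)e_W,e_Z\rangle_\chi=(\pi_\chi(g)e_W)(Z)=J_\chi(g^{-1},Z)^{-1}\,K_\chi(g^{-1}\cdot Z,W),
\end{equation*}
an identity valid for generic $Z,W$ and hence, by holomorphy, wherever both sides make sense. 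Now $g^{-1}=((0,0),0,k^{-1})$, and since $k\in Sp(n,{\mathbb C})$ one has $k^{-1}=\bigl(\begin{smallmatrix}P^{\ast}&-Q^t\\-Q^{\ast}&P^t\end{smallmatrix}\bigr)$, which is again of the form $\bigl(\begin{smallmatrix}\mathcal P&\mathcal Q\\\bar{\mathcal Q}&\bar{\mathcal P}\end{smallmatrix}\bigr)$ with $\mathcal P=P^{\ast}$, $\mathcal Q=-Q^t$. Hence Proposition \ref{prop:Jac} applies to $g^{-1}$ after the substitutions $P\mapsto P^{\ast}$, $Q\mapsto -Q^t$, $z_0\mapsto 0$, $c_0\mapsto 0$.

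I would then evaluate the two factors at $Z=a(y,0)$, $W=a(v,0)$. Specializing the $G^c$-action on ${\mathfrak p}^+$ recalled in Section \ref{sec:5} to $Y=0$ gives $g^{-1}\cdot a(y,0)=a(y',Y')$ with $Y'=-Q^t(P^t)^{-1}=-P^{-1}Q$ (using that $P^{-1}Q$ is symmetric) and, using $QQ^{\ast}=PP^{\ast}-I_n$,
\begin{equation*}
y'=P^{\ast}y-Q^t(P^t)^{-1}Q^{\ast}y=(P^{\ast}-P^{-1}QQ^{\ast})y=P^{-1}y.
\end{equation*}
Feeding $a(P^{-1}y,-P^{-1}Q)$ and $a(v,0)$ into Proposition \ref{prop:Jac}(1), the vanishing of the matrix part of $W$ kills the determinant factor and the middle term of the exponent, leaving
\begin{equation*}
K_\chi(g^{-1}\cdot a(y,0),a(v,0))=\exp\left(\tfrac{\lambda}{4}\bigl(2\,y(P^t)^{-1}\bar v-\bar v\,(P^{-1}Q)\bar v\bigr)\right).
\end{equation*}
Likewise, Proposition \ref{prop:Jac}(3) with $z_0=0$, $c_0=0$, $Y=0$ and $(\mathcal P,\mathcal Q)=(P^{\ast},-Q^t)$, after using $Q^t(P^t)^{-1}=P^{-1}Q$ once more, gives
\begin{equation*}
J_\chi(g^{-1},a(y,0))^{-1}=(\Det P)^{m}\exp\left(\tfrac{\lambda}{4}\bigl(y(\bar P\bar Q^t)y-y(\bar Q\,P^{-1}QQ^{\ast})y\bigr)\right).
\end{equation*}

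Multiplying the last two displays yields $B_k(a(y,0),a(v,0))$ up to the matrix identity
\begin{equation*}
\bar P\bar Q^t-\bar Q\,P^{-1}QQ^{\ast}=\bar Q\,P^{-1},
\end{equation*}
which is the one step that genuinely uses the structure of $S$. I would prove it by substituting $QQ^{\ast}=PP^{\ast}-I_n$, turning the left-hand side into $\bar P\bar Q^t-\bar QP^{\ast}+\bar QP^{-1}$, and then observing that $\bar P\bar Q^t-\bar QP^{\ast}=\overline{PQ^t-QP^t}=0$ by the relation $PQ^t=QP^t$; as the three matrices involved are symmetric (the symmetry of $\bar QP^{-1}$ being recorded in Section \ref{sec:4}) this is the same as the corresponding identity of quadratic forms. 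With it, the $y$-quadratic part collapses to $\tfrac{\lambda}{4}\,y(\bar QP^{-1})y$ and everything assembles into Eq. \ref{eq:Bk}. I expect the main obstacle to be purely bookkeeping—keeping the pairing $zw=\sum_kz_kw_k$ consistent with transposes when passing between matrix identities and quadratic forms, and transcribing Proposition \ref{prop:Jac} faithfully under $P\mapsto P^{\ast}$, $Q\mapsto -Q^t$—rather than anything conceptual; a convenient check is $k=I_{2n}$ ($P=I_n$, $Q=0$), where both sides reduce to $K_\chi(a(y,0),a(v,0))=\exp(\tfrac{\lambda}{2}\,y\bar v)$.
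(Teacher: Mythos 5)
Your argument is correct and follows essentially the same route as the paper: write $B_k(Z,W)=J_\chi(g^{-1},Z)^{-1}K_\chi(g^{-1}\cdot Z,W)$ via the reproducing property, compute $g^{-1}\cdot a(y,0)=a(P^{-1}y,-P^{-1}Q)$, and evaluate both factors with Proposition \ref{prop:Jac}. The only cosmetic difference is in the final simplification of the $y$-quadratic term (you substitute $QQ^{\ast}=PP^{\ast}-I_n$ and use $\overline{PQ^t-QP^t}=0$, whereas the paper factors $Q^{\ast}$ on the right and uses $(PP^{\ast}-QQ^{\ast})^t=I_n$); both reduce to the same identities defining $S$.
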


\begin{proof} Let $Z=a(y,0)$, $W=a(v,0)$ and $g=((0,0),0,k)$ where $k\in S$.
Then we have
\begin{align*}B_k(Z,W)&=K_{\pi_{\chi}(g)}(Z,W)=\langle \pi_{\chi}(g)e_W,e_Z\rangle_{\chi}\\
&=(\pi_{\chi}(g)e_W)(Z)=J_{\chi}(g^{-1},Z)^{-1}e_W(g^{-1}\cdot Z)\\
&=J_{\chi}(g^{-1},Z)^{-1}\langle e_W,e_{g^{-1}\cdot Z}\rangle_{\chi}=J_{\chi}(g^{-1},Z)^{-1}K_{\chi}(g^{-1}\cdot Z,W).
\end{align*}
Note that $k^{-1}=\bigl(\begin{smallmatrix} P^{\ast}&-Q^t\\
-Q^{\ast}& P^t\end{smallmatrix}\bigr)$. Then we get
\begin{equation*}g^{-1}\cdot Z=a(P^{\ast}y-Q^t(P^t)^{-1}Q^{\ast}y,-Q^t(P^t)^{-1})=a(P^{-1}y,-Q^t(P^t)^{-1})=a(P^{-1}y,-P^{-1}Q)
\end{equation*}
since \begin{equation*}P^{\ast}-Q^t(P^t)^{-1}Q^{\ast}=P^{\ast}-P^{-1}QQ^{\ast}=P^{-1}(PP^{\ast}-QQ^{\ast})=P^{-1}.\end{equation*}
Thus, by Proposition \ref{prop:Jac}, we have
\begin{equation*}K_{\chi}(g^{-1}\cdot Z,W)=\exp \left( 
\tfrac{\lambda}{4}\left(2y(P^t)^{-1}{\bar v}-{\bar v}P^{-1}Q{\bar v}\right)\right).
\end{equation*}
Moreover, by Proposition \ref{prop:Jac} again, we find
\begin{equation*}J_{\chi}(g^{-1},Z)^{-1}=(\Det P)^m
\exp \left( 
\tfrac{\lambda}{4}\left(y(P^{\ast})^tQ^{\ast}y-y{\bar Q}Q^t(P^t)^{-1}Q^{\ast}y\right)\right).\end{equation*}
Since we have
\begin{equation*}(P^{\ast})^t-{\bar Q}Q^t(P^t)^{-1}=((P^{\ast})^tP^t-{\bar Q}Q^t)(P^t)^{-1}=(PP^{\ast}-QQ^{\ast})^t(P^t)^{-1}=(P^t)^{-1},
\end{equation*} the result follows.
\end{proof}

For each $k\in S$, we denote by $A_k$ the operator on ${\mathcal F}_{\lambda}$ with kernel
$$b_k(z,w)=B_k(a(z,0),a(w,0)).$$ By Schur's lemma, there exists, for each $k,k'\in S$ a scalar
$\alpha (k,k')$ such that $A_{kk'}=\alpha (k,k')A_kA_{k'}$.

\begin{lemma}\label{lem:alpha} Let $k=\bigl(\begin{smallmatrix} P&Q\\
{\bar Q}&{\bar P} \end{smallmatrix}\bigr)$, $k'=\bigl(\begin{smallmatrix} P'&Q'\\
{\bar Q'}&{\bar P'} \end{smallmatrix}\bigr)$ and $k''=\bigl(\begin{smallmatrix} P''&Q''\\
{\bar Q''}&{\bar P''} \end{smallmatrix}\bigr)$ in $S$. The we have
\begin{equation*}(\Det P'')^m=\alpha (k,k')(\Det P)^m(\Det P')^m(\Det (P^{-1}P''P'^{-1})^{-1/2}.\end{equation*}
Here, if $z\in {\mathbb C}$, we define $z^{1/2}$ as the principal determination of the square-root (with branch cut along the negative real axis).
\end{lemma}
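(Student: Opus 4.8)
The plan is to extract the identity from the relation $A_{kk'}=\alpha(k,k')A_kA_{k'}$ by comparing the kernels of the two sides at the single point $(z,w)=(0,0)$. On the left, Proposition~\ref{prop:exprBk} gives $b_{kk'}(0,0)=B_{kk'}(a(0,0),a(0,0))=(\Det P'')^m$. On the right, I would use the composition rule for kernels with respect to the weighted measure of Section~\ref{sec:2},
\begin{equation*}k_{A_kA_{k'}}(z,w)=\int_{{\mathbb C}^n}b_k(z,u)\,b_{k'}(u,w)\,e^{-\lambda|u|^2/2}\,d\mu_{\lambda}(u),\end{equation*}
and evaluate it at $(0,0)$. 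By Proposition~\ref{prop:exprBk}, $b_k(0,u)=(\Det P)^m\exp(-\tfrac{\lambda}{4}\bar u\,P^{-1}Q\,\bar u)$ and $b_{k'}(u,0)=(\Det P')^m\exp(\tfrac{\lambda}{4}u\,\bar Q'(P')^{-1}u)$, so $k_{A_kA_{k'}}(0,0)=(\Det P)^m(\Det P')^m\,{\mathcal I}$ with
\begin{equation*}{\mathcal I}=\int_{{\mathbb C}^n}\exp\left(\tfrac{\lambda}{4}\bigl(u\,\bar Q'(P')^{-1}u-\bar u\,P^{-1}Q\,\bar u\bigr)-\tfrac{\lambda}{2}|u|^2\right)d\mu_{\lambda}(u).\end{equation*}
Hence the lemma is equivalent to the Gaussian evaluation ${\mathcal I}=\Det(P^{-1}P''P'^{-1})^{-1/2}$.

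To carry this out, set $A:=\bar Q'(P')^{-1}$ and $B:=P^{-1}Q$, which are symmetric by the remarks following the definition of $S$. The relation $PP^{\ast}-QQ^{\ast}=I_n$ yields $I_n-BB^{\ast}=P^{-1}(P^{\ast})^{-1}>0$, so $B$ has operator norm $<1$, and the relation $P^{\ast}P-Q^t\bar Q=I_n$ applied to $k'$ gives the same bound for $A$; thus the real part of the quadratic form in the exponent of ${\mathcal I}$ is negative definite and the integral converges. Viewing ${\mathbb C}^n$ as ${\mathbb R}^{2n}$ and applying the standard complex Gaussian formula — whose one-variable case is the elementary identity $\int\exp(\tfrac{\lambda}{4}(a u^2-b\bar u^2)-\tfrac{\lambda}{2}|u|^2)\,d\mu_{\lambda}(u)=(1+ab)^{-1/2}$ — one gets ${\mathcal I}=\Det(I_n+AB)^{-1/2}$ for a suitable determination of the square root. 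Finally, $k''=kk'$ forces $P''=PP'+Q\bar Q'$, hence $P^{-1}P''P'^{-1}=I_n+P^{-1}Q\,\bar Q'(P')^{-1}=I_n+BA$, and $\Det(I_n+AB)=\Det(I_n+BA)$; substituting into $(\Det P'')^m=\alpha(k,k')(\Det P)^m(\Det P')^m\,{\mathcal I}$ gives the claimed formula.

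The delicate point — and the one I expect to be the main obstacle — is checking that the determination of the square root delivered by the Gaussian integral is the principal one used in the statement. The integral produces the product of the principal square roots of the eigenvalues of $I_n+AB$; because $A$ and $B$ have norm $<1$ these eigenvalues lie in the disc $\{|z-1|<1\}$, so the quantity is nowhere zero, never meets the cut $(-\infty,0]$, and varies continuously with $(k,k')$. I would then pin down the branch by a connectedness argument: $S$ is connected, the maps $(k,k')\mapsto\alpha(k,k')$ and $(k,k')\mapsto{\mathcal I}$ are continuous on $S\times S$, the asserted identity holds trivially at $k=k'=I_{2n}$ (both sides equal $1$), and it therefore holds throughout. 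Everything else — the kernel composition, the norm bounds from the defining relations of $S$, and the Gaussian evaluation itself — is routine once this bookkeeping is organized.
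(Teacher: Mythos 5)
Your argument is essentially the paper's own proof: both evaluate the kernel of $A_{kk'}=\alpha(k,k')A_kA_{k'}$ at $(z,w)=(0,0)$ via the composition formula, reduce to the same Gaussian integral in $u$ with the symmetric matrices $P^{-1}Q$ and $\bar Q'P'^{-1}$, and conclude using $P''=PP'+Q\bar Q'$. The only difference is that the paper evaluates the Gaussian integral by citing Folland, Theorem 3, p.~258 (which also settles the choice of square root), whereas you carry out the evaluation, the convergence check, and the branch bookkeeping explicitly.
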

\begin{proof} Since the kernel of $A_kA_{k'}$ is the convolution of the kernels of $A_k$
and $A_{k'}$, we have
\begin{equation*}b_{kk'}(z,w)=\alpha (k,k')\,\int_{{\mathbb C}^n}b_k(z,u)b_{k'}(u,w)
e^{-\lambda \vert
u\vert^2/2}\,d\mu_{\lambda} (u).
\end{equation*} Taking $z=w=0$, we get
\begin{equation*}(\Det P'')^m=\alpha (k,k')(\Det P)^m(\Det P')^m
\int_{{\mathbb C}^n}\exp \left( \tfrac{\lambda}{4}\left(-{\bar u}P^{-1}Q{\bar u}+u {\bar Q'}
P'^{-1}u\right)\right)e^{-\lambda \vert
u\vert^2/2}\,d\mu_{\lambda} (u).
\end{equation*}
Recall that $P^{-1}Q$ and ${\bar Q'}
P'^{-1}$ are symmetric. Then, the integral in the preceding equality can be evaluated by using
\cite{Fo}, Theorem 3, p. 258 and its value is
\begin{equation*}\Det ^{-1/2}(I_n+P^{-1}Q{\bar Q'}
P'^{-1})=\Det ^{-1/2}(P^{-1}(PP'+Q{\bar Q'})P'^{-1})=
\Det ^{-1/2}(P^{-1}P''P'^{-1}).\end{equation*}
The result follows.
\end{proof}
We are now in position to recover \cite{Fo}, Theorem (4.37) (this result is due to V. Bargmann and C. Itzykson). For $k\in S$, we denote by $\sigma(k)$ the operator $A_k$ with kernel
$b_k(z,w)$ corresponding to $m=-1/2$. Then $\sigma$ is called the metaplectic representation of $S$. Note that the value $m=-1/2$ does not correspond to a holomorphic representation of 
$G$, see (2) of Proposition \ref{prop:Jac}.

\begin{proposition} \begin{enumerate} \item For each $k, k'\in S$, we have $\sigma (kk')=\pm
\sigma(k)\sigma(k')$.
\item For each $k\in S$, $\sigma(k)$ is unitary.
\end{enumerate}
\end{proposition}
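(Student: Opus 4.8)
The plan is to derive both statements from Lemma \ref{lem:alpha} by specializing $m=-1/2$.

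\textbf{Part (1).} With $m=-1/2$, the identity of Lemma \ref{lem:alpha} reads
\begin{equation*}(\Det P'')^{-1/2}=\alpha(k,k')(\Det P)^{-1/2}(\Det P')^{-1/2}\bigl(\Det(P^{-1}P''P'^{-1})\bigr)^{-1/2},\end{equation*}
where $P''$ is the upper-left block of $k''=kk'$, so $P''=PP'+Q\bar{Q'}$. The first thing I would do is rewrite the last factor: since $\Det(P^{-1}P''P'^{-1})=\Det(P)^{-1}\Det(P'')\Det(P')^{-1}$, the quantity $\bigl(\Det(P^{-1}P''P'^{-1})\bigr)^{-1/2}$ equals, \emph{up to a sign} $\pm 1$, the product $(\Det P)^{1/2}(\Det P'')^{-1/2}(\Det P')^{1/2}$ — the sign being exactly the ambiguity in distributing principal square roots over a product. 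Substituting this into the displayed identity, the factors $(\Det P'')^{-1/2}$, $(\Det P)^{-1/2}$, $(\Det P')^{-1/2}$ all cancel (again up to a sign), leaving $\alpha(k,k')=\pm 1$. Hence $A_{kk'}=\sigma(kk')=\pm\sigma(k)\sigma(k')$ as claimed. The only subtlety, and the point I would state carefully, is that this argument shows $\alpha(k,k')$ is a \emph{fourth} root of unity a priori (products and quotients of principal square roots of complex numbers differ by $\pm 1$ at each step, but one must check no factor of $i$ sneaks in); the cleanest way to pin it down is to observe that $\alpha$ is continuous in $(k,k')$ — the Gaussian integral in the proof of Lemma \ref{lem:alpha} depends continuously on $k,k'$ and never vanishes since $I_n+P^{-1}Q\bar{Q'}P'^{-1}$ is invertible — and $\alpha(I_{2n},I_{2n})=1$, while $\alpha$ takes values in a discrete set; but $S\cong Sp(n,\mathbb R)$ is connected, so $\alpha$ would be forced to be identically $1$ if it were continuous and discrete-valued, which is false, so instead one argues on the double cover. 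The honest statement is: $\alpha(k,k')^2$ is computed directly from the squares of the determinants (no branch ambiguity there) and equals $1$, so $\alpha(k,k')=\pm 1$.

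\textbf{Part (2).} For unitarity I would first check $\sigma(I_{2n})=\id$: when $k=I_{2n}$ we have $P=I_n$, $Q=0$, $m=-1/2$, so by \eqref{eq:Bk} the kernel is $b_{I_{2n}}(z,w)=\exp(\tfrac{\lambda}{2}z\bar{w})=\langle e_w,e_z\rangle_{{\mathcal F}_{\lambda}}$, which is exactly the kernel of the identity operator on ${\mathcal F}_{\lambda}$. Next, from part (1) applied with $k'=k^{-1}$ we get $\sigma(k)\sigma(k^{-1})=\pm\sigma(I_{2n})=\pm\id$, so each $\sigma(k)$ is invertible. To get unitarity it suffices to show $\sigma(k)^{\ast}=\pm\sigma(k^{-1})$, equivalently that the kernels satisfy $\overline{b_k(w,z)}=\pm\,b_{k^{-1}}(z,w)$. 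Using \eqref{eq:Bk} with $m=-1/2$ and the identities $k^{-1}=\bigl(\begin{smallmatrix}P^{\ast}&-Q^t\\-Q^{\ast}&P^t\end{smallmatrix}\bigr)$ together with $PP^{\ast}-QQ^{\ast}=I_n$, $PQ^t=QP^t$, this is a direct computation: one checks that the quadratic form in the exponent of $\overline{b_k(w,z)}$ matches that of $b_{k^{-1}}(z,w)$ and that $\overline{(\Det P)^{-1/2}}$ agrees with $(\Det P^{\ast})^{-1/2}=(\overline{\Det P})^{-1/2}$ up to sign. Then $\sigma(k)^{\ast}\sigma(k)=\pm\sigma(k^{-1})\sigma(k)=\pm\id$, and since $\sigma(k)^{\ast}\sigma(k)$ is a positive operator, the sign must be $+$, giving $\sigma(k)^{\ast}\sigma(k)=\id$; as $\sigma(k)$ is also invertible it is unitary.

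\textbf{Main obstacle.} The genuine difficulty is bookkeeping of the square-root branches — ensuring that all the $\pm$ ambiguities really are signs and not fourth roots of unity, and that the positivity argument in Part (2) is legitimately available (i.e.\ that $\sigma(k)$ is bounded, which follows because its kernel is that of a unitary-up-to-scalar operator by \eqref{eq:Ak} and Proposition \ref{prop:relatker1}). A clean way to organize this is to work throughout with the \emph{squares} $\alpha(k,k')^2$, $b_k(z,w)\overline{b_k(w,z)}$, etc., where no branch choice intervenes, deduce the relations up to sign, and only at the very end invoke connectedness of $S$ together with $\sigma(I_{2n})=\id$ to rule out the remaining sign on the squared relations. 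I do not expect the Gaussian-integral computations themselves to pose any trouble, since the relevant formula is already cited from \cite{Fo}, Theorem 3, p.\ 258.
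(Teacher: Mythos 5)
Your proposal is correct and follows essentially the same route as the paper: part (1) is read off from Lemma \ref{lem:alpha} at $m=-1/2$ (your squaring trick $\alpha(k,k')^2=1$ is a clean way to settle the branch ambiguity that the paper leaves implicit), and part (2) uses exactly the paper's argument that $\overline{b_k(w,z)}$ agrees with $b_{k^{-1}}(z,w)$ up to sign, so $\sigma(k)^{\ast}\sigma(k)=\pm\id$, with positivity forcing the $+$ sign. The digression on continuity and connectedness in part (1) is dispensable (as you yourself note), but the argument you ultimately rely on is sound.
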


\begin{proof} (1) This is an immediate consequence of Lemma \ref{lem:alpha}.
(2) From the formula for $b_k$ (see Eq. \ref{eq:Bk} ), we deduce that we have
$b_{k^{-1}}(z,w)=\overline {b_k(z,w)}$ for each $z,w \in {\mathbb C}^n$, hence
$A_{k^{-1}}=A_k^{\ast}$. This implies that $A_kA_k^{\ast}=\pm Id$. Since
$A_kA_k^{\ast}$ is positive, we have  $A_kA_k^{\ast}= Id$. By the same argument,
we also obtain $A_k^{\ast}A_k= Id$.
\end{proof}

We can also give a formula for the differential of $\sigma$.
\begin{proposition} Let $X=\bigl(\begin{smallmatrix} A&B\\
{\bar B}&{\bar A} \end{smallmatrix}\bigr)\in {\mathfrak s}$. Then we have
\begin{align*}(d\sigma &(X)f)(z)=(-\tfrac{1}{2}\Tr(A)+\tfrac{\lambda}{4}z({\bar B}z))f(z)\\
&-\sum_{j=1}^{n}(Az)_j\frac{\partial f}{\partial z_j}-\tfrac{1}{\lambda}\sum_{j,k}b_{jk}
\frac{\partial^2f}{\partial z_j\partial z_k}
\end{align*} where $B=(b_{jk})$.
\end{proposition}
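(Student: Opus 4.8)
The plan is to differentiate the one-parameter family $t\mapsto\sigma(\exp tX)$ at $t=0$, using the explicit kernel of Proposition \ref{prop:exprBk} with $m=-1/2$. First I would fix $X=\bigl(\begin{smallmatrix}A&B\\ \bar B&\bar A\end{smallmatrix}\bigr)\in{\mathfrak s}$ and write $\exp(tX)=k_t=\bigl(\begin{smallmatrix}P_t&Q_t\\ \bar Q_t&\bar P_t\end{smallmatrix}\bigr)$, so that $P_0=I_n$, $Q_0=0$, $\dot P_0=A$ and $\dot Q_0=B$. Choosing the principal branch of $(\Det P_t)^{-1/2}$ (equal to $1$ at $t=0$) gives a continuous local lift with $\sigma(k_0)=\id$, and for $f$ a polynomial (these are dense in ${\mathcal F}_{\lambda}$ and form a common invariant domain for $d\sigma$, on which $\sigma(k_t)$ acts as a differential operator) we have
\begin{equation*}(\sigma(k_t)f)(z)=\int_{{\mathbb C}^n}b_{k_t}(z,w)\,f(w)\,e^{-\lambda\vert w\vert^2/2}\,d\mu_{\lambda}(w),\end{equation*}
with $b_{k_t}(z,w)=(\Det P_t)^{-1/2}\exp\bigl(\tfrac{\lambda}{4}(z(\bar Q_tP_t^{-1}z)+2z(P_t^t)^{-1}\bar w-\bar w(P_t^{-1}Q_t\bar w))\bigr)$ by Proposition \ref{prop:exprBk}. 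Note that $b_{k_0}(z,w)=e^{\lambda z\bar w/2}=\overline{e_z(w)}$.

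Next I would differentiate under the integral sign at $t=0$. A first-order Taylor expansion gives $(\Det P_t)^{-1/2}=1-\tfrac{t}{2}\Tr A+O(t^2)$, $\bar Q_tP_t^{-1}=t\bar B+O(t^2)$, $(P_t^t)^{-1}=I_n-tA^t+O(t^2)$ and $P_t^{-1}Q_t=tB+O(t^2)$; using $zA^t\bar w=(Az)\bar w$ one obtains
\begin{equation*}\frac{d}{dt}\Big|_{t=0}b_{k_t}(z,w)=e^{\lambda z\bar w/2}\Bigl(-\tfrac12\Tr A+\tfrac{\lambda}{4}z(\bar Bz)-\tfrac{\lambda}{2}(Az)\bar w-\tfrac{\lambda}{4}\bar w(B\bar w)\Bigr).\end{equation*}

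The last step is to carry out the $w$-integration. The reproducing property gives $\int_{{\mathbb C}^n}\overline{e_z(w)}f(w)e^{-\lambda\vert w\vert^2/2}d\mu_{\lambda}(w)=\langle f,e_z\rangle_{{\mathcal F}_{\lambda}}=f(z)$, and differentiating this in $z$, together with the identity $\bar w_j\,\overline{e_z(w)}=\tfrac{2}{\lambda}\,\partial\overline{e_z(w)}/\partial z_j$ (hence $\bar w_j\bar w_k\,\overline{e_z(w)}=\tfrac{4}{\lambda^2}\,\partial^2\overline{e_z(w)}/\partial z_j\partial z_k$), shows that integrating $\bar w_j$ against $f(w)e^{-\lambda\vert w\vert^2/2}d\mu_{\lambda}(w)$ produces $\tfrac{2}{\lambda}\partial f/\partial z_j$, and integrating $\bar w_j\bar w_k$ produces $\tfrac{4}{\lambda^2}\partial^2f/\partial z_j\partial z_k$. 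Applying this to the four terms above (the first two of which have $w$-independent coefficients and hence just multiply $f(z)$) yields respectively $-\tfrac12\Tr(A)f(z)$, $\tfrac{\lambda}{4}z(\bar Bz)f(z)$, $-\sum_j(Az)_j\,\partial f/\partial z_j$ and $-\tfrac{1}{\lambda}\sum_{j,k}b_{jk}\,\partial^2f/\partial z_j\partial z_k$, which is exactly the stated formula.

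The one delicate point is justifying differentiation under the integral sign at $t=0$ and pulling the $z$-derivatives through the integral. This is routine for $f$ a polynomial, since $b_{k_t}(z,w)$ and its derivatives in $t$ and $z$ are, up to the Gaussian factor $e^{\lambda z\bar w/2}$, polynomial in $w$ with coefficients smooth in $(t,z)$, so one has local domination against $e^{-\lambda\vert w\vert^2/2}$. The formula then extends from polynomials to the natural domain of $d\sigma(X)$ by the standard density argument. So I expect no genuine obstacle, only careful bookkeeping with transposes and the principal-branch choice of $(\Det P_t)^{-1/2}$.
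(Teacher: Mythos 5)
Your proposal is correct and follows essentially the same route as the paper: differentiate the explicit kernel of $\sigma(k)$ from Proposition \ref{prop:exprBk} to get the kernel $b_X(z,w)=\bigl(-\tfrac{1}{2}\Tr(A)+\tfrac{\lambda}{4}z({\bar B}z)-\tfrac{\lambda}{2}(Az){\bar w}-\tfrac{\lambda}{4}{\bar w}(B{\bar w})\bigr)e^{\lambda z{\bar w}/2}$ of $d\sigma(X)$, then convert the factors ${\bar w}_j$ and ${\bar w}_j{\bar w}_k$ into $\tfrac{2}{\lambda}\partial_{z_j}$ and $\tfrac{4}{\lambda^2}\partial^2_{z_jz_k}$ via the differentiated reproducing property. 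Your version only adds the explicit first-order Taylor expansion of $t\mapsto\exp(tX)$ and the domain/domination remarks, which the paper leaves implicit.
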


\begin{proof} By differentiating the following formula for the kernel $b_k$ of $\sigma(k)$
\begin{equation*} b_k(z,w)=(\Det P)^{-1/2}\exp \left(\tfrac{\lambda}{4}\left(
z({\bar Q}P^{-1}z)+2(P^{-1}z){\bar w}-{\bar w}(P^{-1}Q{\bar w})\right)\right),
\end{equation*}
we obtain a formula for the kernel $b_X(z,w)$ of $d\sigma(X)$:
\begin{equation*} b_X(z,w)=\left(-\tfrac{1}{2}\Tr(A)+\tfrac{\lambda}{4}z({\bar B}z)-
\tfrac{\lambda}{2}(Az){\bar w}-\tfrac{\lambda}{4}{\bar w}(B{\bar w})\right)\exp \left(\tfrac{\lambda}{2}z{\bar w}\right).\end{equation*}
Remark that by differentiating the reproducing property
\begin{equation*}f(z)=\langle f,e_z\rangle_{\mathcal F_{\lambda}}=
\int_{{\mathbb C}^n} e^{\lambda z{\bar w}/2}f(w)e^{-{\lambda} \vert
w\vert^2/2}\,d\mu_{\lambda} (w)\end{equation*} under the integral sign,
we get
\begin{equation*}\frac{\partial f}{\partial z_j}=\tfrac{\lambda}{2}
\int_{{\mathbb C}^n} {\bar w_j}e^{\lambda z{\bar w}/2}f(w)e^{-\lambda \vert
w\vert^2/2}\,d\mu_{\lambda} (w)\end{equation*} for each $j=1,2,\ldots,n$ and,
by differentiating again, we also obtain
\begin{equation*}\frac{\partial^2f}{\partial z_j\partial z_k}=\left(\tfrac{\lambda}{2}\right)^2
\int_{{\mathbb C}^n} {\bar w_j}{\bar w_k}e^{{\lambda} z{\bar w}/2}f(w)e^{-\lambda \vert
w\vert^2/2}\,d\mu_{\lambda} (w)\end{equation*} for each $j,k=1,2,\ldots,n$.
This allows us to compute
\begin{equation*}(d\sigma (X)f)(z)=\int_{{\mathbb C}^n}b_X(z,w)f(w)e^{-{\lambda} \vert
w\vert^2/2}\,d\mu_{\lambda} (w)\end{equation*} and to get the desired result.
\end{proof}

We can also give formulas for the Berezin symbols of $\sigma(k)$ for $k\in S$ and 
$d\sigma(X)$ for $X\in {\mathfrak s}$. We immediately obtain the following proposition.

\begin{proposition} \label{propSsigma} \begin{enumerate} \item Let $k=\bigl(\begin{smallmatrix} P&Q\\
{\bar Q}&{\bar P} \end{smallmatrix}\bigr)\in S$. we have 
\begin{equation*}S_{\lambda}(\sigma(k))(z)=(\Det P)^{-1/2}\exp \left(\tfrac{\lambda}{4}\left(
z({\bar Q}P^{-1}z)+2{\bar z}(P^{-1}-I_n)z-{\bar z}(P^{-1}Q{\bar z})\right)\right).
\end{equation*}
\item  Let $X=\bigl(\begin{smallmatrix} A&B\\
{\bar B}&{\bar A} \end{smallmatrix}\bigr)\in {\mathfrak s}$. Then we have
\begin{equation*}S_{\lambda}(d\sigma (X))(z)=-\tfrac{1}{2}\Tr(A)+\tfrac{\lambda}{4}z({\bar B}z)
-\tfrac{\lambda}{2}(Az){\bar z}-\tfrac{\lambda}{4}{\bar z}(B{\bar z}).\end{equation*}
\end{enumerate}
\end{proposition}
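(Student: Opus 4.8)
The plan is to exploit the elementary fact that the Berezin symbol of an operator is nothing but its kernel restricted to the diagonal, suitably normalized. From the definitions in Section~\ref{sec:2} one has $S_{\lambda}(A)(z)=\langle Ae_z,e_z\rangle_{{\mathcal F}_{\lambda}}/\langle e_z,e_z\rangle_{{\mathcal F}_{\lambda}}=k_A(z,z)\,e^{-\lambda\vert z\vert^2/2}$, because $\langle e_z,e_z\rangle_{{\mathcal F}_{\lambda}}=e_z(z)=e^{\lambda\vert z\vert^2/2}$. Hence both formulas follow at once by setting $w=z$ in the kernels already computed and multiplying by $e^{-\lambda\vert z\vert^2/2}$; no further analysis is needed.

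For part~(1), I would start from the kernel of $\sigma(k)$, namely $b_k(z,w)=(\Det P)^{-1/2}\exp\bigl(\tfrac{\lambda}{4}(z({\bar Q}P^{-1}z)+2(P^{-1}z){\bar w}-{\bar w}(P^{-1}Q{\bar w}))\bigr)$, which is the $m=-1/2$ specialization of Proposition~\ref{prop:exprBk} (as recorded in the proof of the preceding proposition). Putting $w=z$ and combining the factor $e^{\tfrac{\lambda}{2}(P^{-1}z){\bar z}}$ with $e^{-\lambda\vert z\vert^2/2}$, and using the symmetric pairing $uv=\sum_k u_kv_k$ so that $(P^{-1}z){\bar z}={\bar z}(P^{-1}z)$, the two terms assemble into $2{\bar z}(P^{-1}-I_n)z$, which is precisely the middle term in the claimed expression.

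For part~(2), I would take the kernel of $d\sigma(X)$ from the preceding proposition, $b_X(z,w)=\bigl(-\tfrac{1}{2}\Tr(A)+\tfrac{\lambda}{4}z({\bar B}z)-\tfrac{\lambda}{2}(Az){\bar w}-\tfrac{\lambda}{4}{\bar w}(B{\bar w})\bigr)e^{\tfrac{\lambda}{2}z{\bar w}}$, and set $w=z$. The exponential factor becomes $e^{\lambda\vert z\vert^2/2}$ and cancels against $e^{-\lambda\vert z\vert^2/2}$, leaving exactly the polynomial prefactor, which is the asserted symbol.

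There is essentially no obstacle here: the argument is pure bookkeeping in the bilinear pairing $uv=\sum_k u_kv_k$. The only point deserving a moment's attention is the identification of the kernel of $\sigma(k)$ with the $m=-1/2$ instance of the holomorphic-representation kernel $B_k$ of Proposition~\ref{prop:exprBk}, even though, by part~(2) of Proposition~\ref{prop:Jac}, the value $m=-1/2$ does not itself correspond to a unitary holomorphic representation of $G$.
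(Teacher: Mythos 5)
Your proposal is correct and coincides with the paper's (implicit) argument: the author states the proposition as an immediate consequence of the kernel formulas, precisely because $S_{\lambda}(A)(z)=k_A(z,z)\langle e_z,e_z\rangle_{{\mathcal F}_{\lambda}}^{-1}=k_A(z,z)e^{-\lambda\vert z\vert^2/2}$, which is exactly the diagonal-restriction computation you carry out. The bookkeeping in both parts checks out, including the assembly of $2(P^{-1}z)\bar z-2\bar z z$ into $2\bar z(P^{-1}-I_n)z$ and the cancellation of $e^{\lambda z\bar z/2}$ in part (2).
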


\section{Complex Weyl symbols of metaplectic operators} \label{sec:7}
In this section, we compute $W_0(\sigma(k))$ for $k\in S$ and $W_0(d\sigma(X))$ for
$X\in {\mathfrak s}$. We begin with two technical lemmas. The first one is a variant of \cite{Fo}, Theorem 3, p. 258.

\begin{lemma} \label{lemgauss} Let $A,B, D$ be $n\times n$ complex matrices such that
$A^t=A, D^t=D$. Let $M=\bigl(\begin{smallmatrix} A&B^t\\
B&D \end{smallmatrix}\bigr)$, $U=\bigl(\begin{smallmatrix} I_n&iI_n\\
I_n&-iI_n \end{smallmatrix}\bigr)$ and $N=U^tMU$. Assume that $\Rea (N)$ is positive definite. Let $u,v \in {\mathbb C}^n$. Then we have 
\begin{align*}\int_{{\mathbb C}^n}&\exp\left(-\left( w(Aw)+{\bar w}(D{\bar w})+2{\bar w}(Bw)\right)\right)\exp (uw+v{\bar w})\,dm(w)\\
=&\pi^n(\Det N)^{-1/2}\exp \left( \tfrac{1}{4}\begin{pmatrix}u&v\end{pmatrix}M^{-1}\begin{pmatrix}u\\v\end{pmatrix}\right).
\end{align*}
\end{lemma}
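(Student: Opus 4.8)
The plan is to reduce the $2n$-dimensional complex Gaussian integral over ${\mathbb C}^n$ to a $2n$-dimensional real Gaussian integral over ${\mathbb R}^{2n}$, to which the standard formula (\cite{Fo}, Theorem 3, p. 258, or the real version thereof) applies directly. First I would write $w=x+iy$ with $x,y\in{\mathbb R}^n$, so that $dm(w)=dx\,dy$, and $\bar w=x-iy$. Collecting the quadratic form in the exponent, the expression $w(Aw)+{\bar w}(D{\bar w})+2{\bar w}(Bw)$ becomes, in terms of the real column vector $\xi=\bigl(\begin{smallmatrix}x\\y\end{smallmatrix}\bigr)$, a quadratic form $\xi^t N \xi$ where $N$ is precisely $U^tMU$: indeed writing $\bigl(\begin{smallmatrix}w\\ \bar w\end{smallmatrix}\bigr)=U\bigl(\begin{smallmatrix}x\\ y\end{smallmatrix}\bigr)$, one has $w(Aw)+{\bar w}(D{\bar w})+2{\bar w}(Bw)=\bigl(\begin{smallmatrix}w\\ \bar w\end{smallmatrix}\bigr)^t M \bigl(\begin{smallmatrix}w\\ \bar w\end{smallmatrix}\bigr)=\xi^t U^t M U\,\xi=\xi^t N\xi$, using $A^t=A$, $D^t=D$ and the symmetric splitting of the cross term via $B^t$ and $B$. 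Similarly the linear term $uw+v{\bar w}$ equals $\bigl(\begin{smallmatrix}u&v\end{smallmatrix}\bigr)\bigl(\begin{smallmatrix}w\\ \bar w\end{smallmatrix}\bigr)=\bigl(\begin{smallmatrix}u&v\end{smallmatrix}\bigr)U\xi=:\eta^t\xi$ with $\eta^t=\bigl(\begin{smallmatrix}u&v\end{smallmatrix}\bigr)U$.

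Once the integral is in the real form $\int_{{\mathbb R}^{2n}}\exp(-\xi^t N\xi+\eta^t\xi)\,d\xi$, the hypothesis that $\Rea(N)$ is positive definite guarantees absolute convergence, and completing the square gives the value $\pi^n(\Det N)^{-1/2}\exp\bigl(\tfrac14\eta^t N^{-1}\eta\bigr)$; here $(\Det N)^{-1/2}$ is read off by analytic continuation from the case $N$ real positive definite, exactly as in \cite{Fo}, Theorem 3, p. 258. It remains to rewrite $\eta^t N^{-1}\eta$ in terms of $M$: since $N=U^tMU$ we have $N^{-1}=U^{-1}M^{-1}(U^t)^{-1}$, so
\begin{equation*}\eta^t N^{-1}\eta=\begin{pmatrix}u&v\end{pmatrix}U\,U^{-1}M^{-1}(U^t)^{-1}\,U^t\begin{pmatrix}u\\ v\end{pmatrix}=\begin{pmatrix}u&v\end{pmatrix}M^{-1}\begin{pmatrix}u\\ v\end{pmatrix},\end{equation*}
which is the claimed formula. (One should note $U$ is invertible: $\Det U=(-2i)^n\neq0$.)

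The only genuinely delicate point is the correct interpretation and consistency of the branch of $(\Det N)^{-1/2}$, and the justification that the stated closed form, initially valid for real positive definite $N$, extends by analyticity to all $N$ with $\Rea(N)>0$. I would handle this exactly as in \cite{Fo}: both sides are holomorphic in the entries of $M$ (equivalently of $N$) on the connected open set $\{\Rea(N)>0\}$, they agree on the real slice, hence they agree throughout, with $(\Det N)^{-1/2}$ defined by continuation from the positive value it takes when $N$ is real positive definite. A secondary bookkeeping point is checking that the cross term $2{\bar w}(Bw)$ really matches the off-diagonal blocks $B^t$ and $B$ of $M$ under the pairing $\bigl(\begin{smallmatrix}w\\ \bar w\end{smallmatrix}\bigr)^tM\bigl(\begin{smallmatrix}w\\ \bar w\end{smallmatrix}\bigr)$; this is a one-line verification since $w^t B^t\bar w+\bar w^t B w=2\bar w^t B w$ as scalars. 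With these observations in place the proof is essentially a change of variables plus the classical Gaussian formula, so I expect no serious obstacle beyond the branch-of-square-root convention.
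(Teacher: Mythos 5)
Your proposal is correct and follows essentially the same route as the paper: the change of variables $w=x+iy$, the identification of the quadratic form with $\xi^t N\xi$ via $\bigl(\begin{smallmatrix}w\\ \bar w\end{smallmatrix}\bigr)=U\bigl(\begin{smallmatrix}x\\ y\end{smallmatrix}\bigr)$, and the reduction to the standard real Gaussian integral with complex symmetric matrix of positive definite real part. The only difference is that you spell out the simplification $\eta^t N^{-1}\eta=\begin{pmatrix}u&v\end{pmatrix}M^{-1}\begin{pmatrix}u\\ v\end{pmatrix}$ and the analytic-continuation justification of the branch of $(\Det N)^{-1/2}$, which the paper leaves implicit by citing the known formula.
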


\begin{proof} Write $w=x+iy$ with $x, y \in {\mathbb R}^n$. Then $\begin{pmatrix}w\\{\bar w}\end{pmatrix}=U\begin{pmatrix}x\\y\end{pmatrix}$. We have
\begin{equation*}w(Aw)+{\bar w}(D{\bar w})+2{\bar w}(Bw)=(w,{\bar w})M\begin{pmatrix}w\\{\bar w}\end{pmatrix}=(x,y)N\begin{pmatrix}x\\y\end{pmatrix}
\end{equation*} and $uw+v{\bar w}= \begin{pmatrix}u&v\end{pmatrix}U
\begin{pmatrix}x\\y\end{pmatrix}$.

The result then follows from the well-known equality
\begin{equation*}\int_{{\mathbb R}^n}\exp (-xAx+zx)\,dx=(\Det A)^{-1/2}\pi ^{n/2}
\exp\left(\tfrac{1}{4}z(A^{-1}z)\right)
\end{equation*} for $z\in {\mathbb C}^n$ and $A$ a $n\times n$ symmetric complex matrix such that $\Rea(A)$ is definite positive.
\end{proof}

\begin{lemma} \label{lemmatrices} \begin{enumerate}
\item Let $a,b, p$ be $n\times n$ complex matrices such that
$\bigl(\begin{smallmatrix} -a&I_n+p^t\\
I_n+p&d \end{smallmatrix}\bigr)$ is invertible with inverse matrix $\bigl(\begin{smallmatrix} \alpha &\beta \\
\gamma & \delta  \end{smallmatrix}\bigr)$. Then we have
\begin{equation*}\begin{pmatrix} a&I_n-p^t\\
p-I_n&d \end{pmatrix}\begin{pmatrix} \alpha &\beta \\
\gamma & \delta  \end{pmatrix} \begin{pmatrix} a&p^t-I_n\\
I_n-p&d \end{pmatrix}=\begin{pmatrix}4\delta -a&3I_n-4\gamma -p^t\\
3I_n-4\beta-p&4\alpha+d \end{pmatrix}.
\end{equation*}
\item Take $a={\bar Q}P^{-1}$, $d=P^{-1}Q$ and $p=P^{-1}$ with 
$k=\bigl(\begin{smallmatrix} P&Q\\
{\bar Q}&{\bar P} \end{smallmatrix}\bigr)\in S$. Let $J=\bigl(\begin{smallmatrix} 0&I_n\\
-I_n&0 \end{smallmatrix}\bigr)$. Then we have
\begin{equation*}\tfrac{1}{2}J(k-I_{2n})(k+I_{2n})^{-1}=\begin{pmatrix}\delta &\tfrac{1}{2}I_n-\gamma\\
\tfrac{1}{2}I_n-\beta&\alpha \end{pmatrix}.\end{equation*}
\item Let $k=\bigl(\begin{smallmatrix} P&Q\\
{\bar Q}&{\bar P} \end{smallmatrix}\bigr)\in S$. Then
\begin{equation*}\Det \begin{pmatrix}-{\bar Q}P^{-1} &I_n+(P^t)^{-1}\\
I_n+P^{-1}&P^{-1}Q \end{pmatrix}=(-1)^n(\Det P)^{-1}\Det(k+I_{2n}).
\end{equation*}\end{enumerate}
\end{lemma}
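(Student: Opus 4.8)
The plan is to prove the three parts separately, deriving parts (2) and (3) from one common factorization of $M$ through $k+I_{2n}$.

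For part (1), I would keep the general matrix $M=\bigl(\begin{smallmatrix}-a&I_n+p^t\\ I_n+p&d\end{smallmatrix}\bigr)$ and notice that both outer factors are affine in $M$: with $S=\bigl(\begin{smallmatrix}0&I_n\\ p&d\end{smallmatrix}\bigr)$ and $S'=\bigl(\begin{smallmatrix}0&p^t\\ I_n&d\end{smallmatrix}\bigr)$ one checks, by adding $M$, that $\bigl(\begin{smallmatrix}a&I_n-p^t\\ p-I_n&d\end{smallmatrix}\bigr)=-M+2S$ and $\bigl(\begin{smallmatrix}a&p^t-I_n\\ I_n-p&d\end{smallmatrix}\bigr)=-M+2S'$. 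Using only $MM^{-1}=M^{-1}M=I_{2n}$, the product then collapses to
\[ (-M+2S)\,M^{-1}\,(-M+2S')=M-2S-2S'+4\,SM^{-1}S'. \]
The first three terms give $\bigl(\begin{smallmatrix}-a&-I_n-p^t\\ -I_n-p&-3d\end{smallmatrix}\bigr)$, while the blocks of $SM^{-1}S'$ simplify to $\bigl(\begin{smallmatrix}\delta&I_n-\gamma\\ I_n-\beta&\alpha+d\end{smallmatrix}\bigr)$ once I substitute $\gamma p^t+\delta d=I_n-\gamma$ and $\alpha(I_n+p^t)+\beta d=0$ (from $M^{-1}M=I_{2n}$) together with $p\beta+d\delta=I_n-\beta$ and $p\alpha+d\gamma=-\alpha$ (from $MM^{-1}=I_{2n}$). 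Adding the two pieces gives exactly the claimed matrix; note that no symmetry of $a,d$ is needed.

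For parts (2) and (3) I would first record the two consequences of the relations of $S$ that do all the work: (a) $P^t\bar Q=Q^{\ast}P$ (the transpose form of the symmetry of $\bar QP^{-1}$, equivalently the conjugate of $P^{\ast}Q=Q^t\bar P$), and (b) $P^t\bar P=I_n+Q^{\ast}Q$ (the transpose of $P^{\ast}P-Q^t\bar Q=I_n$). Taking $a=\bar QP^{-1}$, $d=P^{-1}Q$, $p=P^{-1}$, so that $M$ is the matrix of part (3), the central claim is the factorization
\[ \begin{pmatrix}P^t&0\\ 0&P\end{pmatrix}M=\begin{pmatrix}0&I_n\\ I_n&0\end{pmatrix}\begin{pmatrix}I_n&0\\ -Q^{\ast}&P^t\end{pmatrix}(k+I_{2n}), \]
equivalently $M^{-1}=(k+I_{2n})^{-1}\bigl(\begin{smallmatrix}0&P\\ I_n&\bar Q\end{smallmatrix}\bigr)$. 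I would prove it by checking that both sides equal $\bigl(\begin{smallmatrix}-Q^{\ast}&P^t+I_n\\ P+I_n&Q\end{smallmatrix}\bigr)$: the left-hand product uses (a) through $-P^t\bar QP^{-1}=-Q^{\ast}$, and the factor $\bigl(\begin{smallmatrix}I_n&0\\ -Q^{\ast}&P^t\end{smallmatrix}\bigr)(k+I_{2n})$ has bottom row $\bigl(-Q^{\ast},\,I_n+P^t\bigr)$ precisely by (a) and (b).

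Part (3) is then immediate on taking determinants: $\Det\bigl(\begin{smallmatrix}I_n&0\\ -Q^{\ast}&P^t\end{smallmatrix}\bigr)=\Det P$ and $\Det\bigl(\begin{smallmatrix}0&I_n\\ I_n&0\end{smallmatrix}\bigr)=(-1)^n$, so $(\Det P)^2\Det M=(-1)^n(\Det P)\Det(k+I_{2n})$. For part (2) I would write $\tfrac12 J(k-I_{2n})(k+I_{2n})^{-1}=\tfrac12 J-J(k+I_{2n})^{-1}$, put $(k+I_{2n})^{-1}=\bigl(\begin{smallmatrix}x&y\\ z&w\end{smallmatrix}\bigr)$, and read off $\alpha=y$, $\gamma=w$, $\beta=xP+y\bar Q$, $\delta=zP+w\bar Q$ from $M^{-1}=(k+I_{2n})^{-1}\bigl(\begin{smallmatrix}0&P\\ I_n&\bar Q\end{smallmatrix}\bigr)$. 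Comparing block by block with $\tfrac12 J-J(k+I_{2n})^{-1}=\bigl(\begin{smallmatrix}-z&\frac12 I_n-w\\ x-\frac12 I_n&y\end{smallmatrix}\bigr)$, the four target equalities reduce to the $(1,1)$ and $(2,1)$ blocks of $(k+I_{2n})^{-1}(k+I_{2n})=I_{2n}$, namely $x(P+I_n)+y\bar Q=I_n$ and $z(P+I_n)+w\bar Q=0$. The main obstacle is discovering the factorization above: once the diagonal factor $\bigl(\begin{smallmatrix}P^t&0\\ 0&P\end{smallmatrix}\bigr)$ and the unipotent factor $\bigl(\begin{smallmatrix}I_n&0\\ -Q^{\ast}&P^t\end{smallmatrix}\bigr)$ are guessed so as to clear the inverses in $M$ and rebuild $k+I_{2n}$, both (2) and (3) become one-line consequences, and everything else is bookkeeping with (a) and (b); part (1) is routine once the decompositions $-M+2S$, $-M+2S'$ are spotted.
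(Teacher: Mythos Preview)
Your proof is correct. The overall strategy matches the paper's: for part~(1) a direct block computation using the relations encoded in $MM^{-1}=M^{-1}M=I_{2n}$, and for parts~(2)--(3) a factorization of $M$ through $k+I_{2n}$ followed by taking determinants and a block comparison.

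There are minor organizational differences worth noting. For (1), the paper simply multiplies out in two steps, first computing $M^{-1}\bigl(\begin{smallmatrix} a & p^t-I_n\\ I_n-p & d\end{smallmatrix}\bigr)$ and then left-multiplying by the remaining factor, reading off each block from the eight inverse relations. Your decomposition of the outer matrices as $-M+2S$ and $-M+2S'$ is a neat shortcut that collapses the product to $M-2S-2S'+4SM^{-1}S'$ and uses only four of those relations; this is a bit more efficient but equivalent in content. For (2)--(3), the paper uses the factorization $M\bigl(\begin{smallmatrix}-P&-Q\\0&I_n\end{smallmatrix}\bigr)=J(k+I_{2n})$ (right-multiplication by a single block-triangular matrix), whereas you left-multiply by $\bigl(\begin{smallmatrix}P^t&0\\0&P\end{smallmatrix}\bigr)$ and factor the right-hand side through $\bigl(\begin{smallmatrix}0&I_n\\I_n&0\end{smallmatrix}\bigr)\bigl(\begin{smallmatrix}I_n&0\\-Q^{\ast}&P^t\end{smallmatrix}\bigr)$. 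The paper's factorization is slightly leaner, and its route to (2) goes via the identity $\bigl(\begin{smallmatrix}\delta&\tfrac12 I_n-\gamma\\ \tfrac12 I_n-\beta&\alpha\end{smallmatrix}\bigr)=\tfrac12\bigl(\begin{smallmatrix}0&I_n\\I_n&0\end{smallmatrix}\bigr)-JM^{-1}J$, while yours uses the Cayley-type rewriting $\tfrac12 J(k-I_{2n})(k+I_{2n})^{-1}=\tfrac12 J-J(k+I_{2n})^{-1}$; both reduce the claim to the same inverse relations, so the difference is cosmetic.
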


\begin{proof} (1) By writing
\begin{equation*}\begin{pmatrix} \alpha &\beta \\
\gamma & \delta  \end{pmatrix} \begin{pmatrix} -a&I_n+p^t\\
I_n+p&d \end{pmatrix}=\begin{pmatrix} -a&I_n+p^t\\
I_n+p&d \end{pmatrix}\begin{pmatrix} \alpha &\beta \\
\gamma & \delta  \end{pmatrix} =I_{2n}
\end{equation*} 
we obtain the series of equations
\begin{align*}\alpha a=&\beta (I_n+p)-I_n\quad; \quad \gamma a=\delta (I_n+p);\\
\beta d=&-\alpha (I_n+p^t)\quad; \quad \delta d=I_n-\gamma (I_n+p^t);\\
a \alpha =&(I_n+p^t)\gamma -I_n\quad; \quad a\beta=(I_n+p^t)\delta;\\
d\gamma=&-(I_n+p)\alpha \quad; \quad d\delta=I_n-(I_n+p)\beta.
\end{align*}
By using these equations, we obtain firstly
\begin{equation*}\begin{pmatrix} \alpha &\beta \\
\gamma & \delta  \end{pmatrix} \begin{pmatrix} a&-I_n+p^t\\
I_n-p&d \end{pmatrix}=\begin{pmatrix} 2\beta-I_n &-2\alpha \\
2\delta & I_n-2\gamma  \end{pmatrix}\end{equation*}
and, secondly,
\begin{equation*}\begin{pmatrix} a&I_n-p^t\\
-I_n+p&d \end{pmatrix}\begin{pmatrix} 2\beta-I_n &-2\alpha \\
2\delta & I_n-2\gamma  \end{pmatrix}= 
\begin{pmatrix}4\delta -a&3I_n-4\gamma -p^t\\
3I_n-4\beta-p&4\alpha+d \end{pmatrix}.
\end{equation*}

(2)-(3) First we have
\begin{align*}
\begin{pmatrix}-{\bar Q}P^{-1} &I_n+(P^t)^{-1}\\
I_n+P^{-1}&P^{-1}Q \end{pmatrix}&\begin{pmatrix} -P&-Q\\
0&I_n \end{pmatrix}=\begin{pmatrix} {\bar Q}&{\bar Q}P^{-1}Q +I_n+(P^t)^{-1}\\
-I_n-P&-Q \end{pmatrix}\\=&\begin{pmatrix} {\bar Q}&I_n+{\bar P}\\
-I_n-P&-Q \end{pmatrix}=J(k+I_{2n})\end{align*}
since \begin{equation*}{\bar Q}P^{-1}Q +(P^t)^{-1}=({\bar Q}P^{-1}QP^t+I_n)(P^t)^{-1}=({\bar Q}Q^t+I_n)(P^t)^{-1}={\bar P}.\end{equation*}
On the one hand, passing to the determinant, we obtain (3) and, on the other hand, we 
deduce that
\begin{equation*}\begin{pmatrix} -P&-Q\\
0&I_n \end{pmatrix}=\begin{pmatrix} \alpha &\beta \\
\gamma & \delta  \end{pmatrix}\,J(k+I_{2n}).\end{equation*}
This implies that
\begin{align*}\begin{pmatrix}\delta &\tfrac{1}{2}I_n-\gamma\\
\tfrac{1}{2}I_n-\beta&\alpha \end{pmatrix}&=\frac{1}{2}\begin{pmatrix} 0&I_n\\
I_n&0 \end{pmatrix}-J\begin{pmatrix} \alpha &\beta \\
\gamma & \delta  \end{pmatrix}J\\
&=\frac{1}{2}\begin{pmatrix} 0&I_n\\
I_n&0 \end{pmatrix}-J\begin{pmatrix} -P&-Q\\
0&I_n \end{pmatrix}(k+I_{2n})^{-1}\\
&=\left( \frac{1}{2}\begin{pmatrix} 0&I_n\\
I_n&0 \end{pmatrix}(k+I_{2n})-J\begin{pmatrix} -P&-Q\\
0&I_n \end{pmatrix}\right)(k+I_{2n})^{-1}\\
&=\frac{1}{2}J(k-I_{2n})(k+I_{2n})^{-1}.
\end{align*}
\end{proof}

We denote by $\Arg(z)$ the principal argument of $z\in {\mathbb C}$.

\begin{proposition} \label{propW0sigma} Let $k=\bigl(\begin{smallmatrix} P&Q\\
{\bar Q}&{\bar P} \end{smallmatrix}\bigr)\in S$. Then we have
\begin{equation*}W_0(\sigma(k))(z)=c_n(k) \exp \left( \tfrac{\lambda}{2}\begin{pmatrix}z&
{\bar z}\end{pmatrix}J(k-I_{2n})
(k+I_{2n})^{-1}\begin{pmatrix}z\\{\bar z}\end{pmatrix}\right)\end{equation*}
where 
\begin{align*}
c_n(k)&=2^n(\Det(I_{2n}+k))^{-1/2}\, if \, \Det(I_{2n}+k)>0;\\
c_n(k)&=-i2^n\vert \Det(I_{2n}+k)\vert^{-1/2}\, if \, \Det(I_{2n}+k)<0\, and\,
\Arg(\Det(P))\in ]0,\pi[ ;\\
c_n(k)&=i2^n\vert \Det(I_{2n}+k)\vert^{-1/2}\, if \, \Det(I_{2n}+k)<0\, and \,
\Arg(\Det(P))\in ]-\pi,0[.\end{align*}
\end{proposition}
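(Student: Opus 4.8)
The plan is to evaluate the integral formula for $W_0$ applied to $\sigma(k)$ directly, using the explicit expression for the kernel $b_k$ from Proposition \ref{prop:exprBk} (with $m=-1/2$) and the Gaussian integration Lemma \ref{lemgauss}, and then to identify the resulting quadratic form and prefactor with the matrix expressions supplied by Lemma \ref{lemmatrices}. Concretely, I would start from the symmetric integral formula \eqref{eq:intWsym},
\begin{equation*}
W_0(\sigma(k))(z)=2^n\int_{{\mathbb C}^n}b_k(z+w,z-w)\exp \left(\tfrac{\lambda}{2}\left(-z{\bar z}-w{\bar w}+z{\bar w}-{\bar z}w\right)\right) d\mu_{\lambda}(w),
\end{equation*}
substitute
\begin{equation*}
b_k(z+w,z-w)=(\Det P)^{-1/2}\exp \left(\tfrac{\lambda}{4}\left((z+w)({\bar Q}P^{-1}(z+w))+2(P^{-1}(z+w))\overline{(z-w)}-\overline{(z-w)}(P^{-1}Q\overline{(z-w)})\right)\right),
\end{equation*}
and expand the exponent as a quadratic-plus-linear expression in $w$ and ${\bar w}$. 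Collecting terms, the $w$-quadratic part has the shape $-\bigl(w(Aw)+{\bar w}(D{\bar w})+2{\bar w}(Bw)\bigr)$ with $A=-\tfrac{\lambda}{4}{\bar Q}P^{-1}$, $D=-\tfrac{\lambda}{4}P^{-1}Q$ and $B$ built from $\tfrac{\lambda}{2}I_n$ and $\pm\tfrac{\lambda}{2}P^{-1}$ — that is, up to the factor $\tfrac{\lambda}{4}$, precisely the matrix $\bigl(\begin{smallmatrix}-a&I_n+p^t\\ I_n+p&d\end{smallmatrix}\bigr)$ of Lemma \ref{lemmatrices} with $a={\bar Q}P^{-1}$, $d=P^{-1}Q$, $p=P^{-1}$. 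The linear part in $(w,{\bar w})$ will be a linear form in $(z,{\bar z})$.

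Next I would apply Lemma \ref{lemgauss} to this Gaussian integral. The determinant factor it produces is $(\Det N)^{-1/2}$ where $N=U^tMU$; by part (3) of Lemma \ref{lemmatrices} this is, up to the $\bigl(\tfrac{\lambda}{4}\bigr)^{2n}$ scaling and the $(2\pi)^{-n}\lambda^n$ from $d\mu_\lambda$, proportional to $\bigl((-1)^n(\Det P)^{-1}\Det(k+I_{2n})\bigr)^{-1/2}$, and combining this with the leading $(\Det P)^{-1/2}$ from $b_k$ and the $2^n$ gives a prefactor whose modulus is $2^n|\Det(I_{2n}+k)|^{-1/2}$. The surviving Gaussian also contributes the term $\tfrac14\begin{pmatrix}u&v\end{pmatrix}M^{-1}\begin{pmatrix}u\\ v\end{pmatrix}$ which, added to the $z$-dependent terms left over from the exponent of $b_k$ and from $\exp(-\tfrac{\lambda}{2}z{\bar z})$, must be reorganized into $\tfrac{\lambda}{2}\begin{pmatrix}z&{\bar z}\end{pmatrix}\bigl(\tfrac12 J(k-I_{2n})(k+I_{2n})^{-1}\bigr)\cdots$ — and here part (1) together with part (2) of Lemma \ref{lemmatrices} is exactly what converts the combination $\bigl(\begin{smallmatrix}4\delta-a&3I_n-4\gamma-p^t\\ 3I_n-4\beta-p&4\alpha+d\end{smallmatrix}\bigr)$ (which is what the bilinear algebra will throw up after accounting for the "outer" matrices $\bigl(\begin{smallmatrix}a&I_n-p^t\\ p-I_n&d\end{smallmatrix}\bigr)$ etc.) into the clean symplectic Cayley-type expression $\tfrac12 J(k-I_{2n})(k+I_{2n})^{-1}$.

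The last and most delicate step is pinning down the branch of the square root, i.e. establishing the three-case formula for $c_n(k)$. The modulus is forced by the computation above; the issue is the argument of $(\Det N)^{-1/2}(\Det P)^{-1/2}$ when $\Det(I_{2n}+k)$ changes sign. I would argue by continuity/connectedness: $S\cong Sp(n,{\mathbb R})$ is connected, and $\sigma$ is a (projective, hence up to sign locally continuous) representation, so $W_0(\sigma(k))$ depends continuously on $k$ away from the locus $\Det(I_{2n}+k)=0$; on the open set where $\Det(I_{2n}+k)>0$ the positive real square root is the correct choice (checked at $k=I_{2n}$, where $\sigma(I_{2n})=\mathrm{Id}$ and $W_0(\mathrm{Id})=1$, forcing $c_n(I_{2n})=2^n\cdot 2^{-n/2}$ — consistent). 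Crossing the hypersurface $\Det(I_{2n}+k)=0$, $\Det N$ passes through $0$ and picks up a factor $e^{\pm i\pi}$; which sign occurs is detected by the sign of the imaginary part of $\Det N$ near the crossing, and via part (3) of Lemma \ref{lemmatrices} this is governed by $\Arg(\Det P)$, since $(\Det P)^{-1}$ is the only factor carrying a nonreal phase there. Tracking $\Arg(\Det P)\in\,]0,\pi[$ versus $]-\pi,0[$ then yields the factors $-i$ and $+i$ respectively. The main obstacle is precisely this phase bookkeeping — making the continuity argument rigorous and checking that the $(\Det N)^{-1/2}$ and $(\Det P)^{-1/2}$ branches combine to give exactly $\mp i$ rather than $\pm i$ — whereas the algebraic identification of the quadratic form is routine given Lemma \ref{lemmatrices}.
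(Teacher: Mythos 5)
Your computational core --- the symmetric integral formula \eqref{eq:intWsym}, the substitution of $b_k(z+w,z-w)$, the Gaussian evaluation via Lemma \ref{lemgauss} with $M=\tfrac{\lambda}{4}\bigl(\begin{smallmatrix}-\bar QP^{-1}&I_n+(P^t)^{-1}\\ I_n+P^{-1}&P^{-1}Q\end{smallmatrix}\bigr)$, and the reorganization of the quadratic form through parts (1)--(3) of Lemma \ref{lemmatrices} --- is exactly the paper's argument, and that part of your plan is sound.

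The genuine gap is in your treatment of the branch of the square root, where you replace a finite algebraic computation by a continuity argument that does not work. After the Gaussian integration the prefactor is the concrete product of two \emph{principal} square roots, $c_n(k)=2^n(\Det P)^{-1/2}\bigl(\Det(k+I_{2n})(\Det P)^{-1}\bigr)^{-1/2}$, and the paper's only remaining input is that $\Det(I_{2n}+k)=\Det(I_{2n}+g)$ is real (since $k=U^{-1}gU$ with $g\in Sp(n,{\mathbb R})$); the three cases then fall out by multiplying the two principal determinations directly, with no continuity or connectedness needed. Your continuity argument fails for two reasons. First, $\sigma(k)$ is \emph{defined} here through the kernel containing $(\Det P)^{-1/2}$ in its principal determination, so $k\mapsto\sigma(k)$, hence $k\mapsto W_0(\sigma(k))$, is already discontinuous across the locus $\Arg(\Det P)=\pi$; there is no globally continuous function to propagate. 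Second, and decisively, the answer is \emph{not} locally constant in phase on the region $\{\Det(I_{2n}+k)<0\}$: already for $n=1$ that region is $\{\Rea P<-1\}$, which is connected, yet $\Arg(\Det P)$ sweeps through both $]0,\pi[$ and $]-\pi,0[$ there, so $c_n(k)$ jumps between $-i|\cdot|^{-1/2}$ and $+i|\cdot|^{-1/2}$ inside a single component. No argument anchored at a reference point and propagated by continuity can reproduce that; you must evaluate the product of the two principal square roots case by case as the paper does. (Minor: your anchor check at $k=I_{2n}$ should give $c_n(I_{2n})=2^n\cdot(2^{2n})^{-1/2}=1$, matching $W_0(\mathrm{Id})=1$; your value $2^n\cdot 2^{-n/2}$ is an arithmetic slip and would actually be inconsistent.)
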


\begin{proof} Let $k=\bigl(\begin{smallmatrix} P&Q\\
{\bar Q}&{\bar P} \end{smallmatrix}\bigr)\in S$. Recall that $W_0(\sigma(k))$ is given by
\begin{equation*}W_0(\sigma(k))(z)=\left(\tfrac{\lambda}{\pi}\right)^n\int_{{\mathbb C}^n}b_k(z+w,z-w)\exp \left(\tfrac{\lambda}{2}\left(-z{\bar z}-w{\bar w}+z{\bar w}-{\bar z}w\right)\right) dm(w)\end{equation*}
where the kernel of $\sigma(k)$ is
\begin{equation*}b_k(z,w)=(\Det P)^{-1/2}\exp \left(\tfrac{\lambda}{4}\left(
z({\bar Q}P^{-1}z)+2{\bar w}(P^{-1}z)-{\bar w}(P^{-1}Q{\bar w})\right)\right).
\end{equation*}
Then we get
\begin{align*}W_0&(\sigma(k))(z)=\left(\tfrac{\lambda}{\pi}\right)^n
(\Det P)^{-1/2}\exp \left(\tfrac{\lambda}{4}\left(
z({\bar Q}P^{-1}z)+2{\bar z}(P^{-1}-I_n)z-{\bar z}(P^{-1}Q{\bar z})\right)\right)\\
\times & \int_{{\mathbb C}^n}\exp \left(\tfrac{\lambda}{4}\left(w{\bar Q}P^{-1}w-{\bar w}(P^{-1}Q{\bar w})-2{\bar w}(I_n+P^{-1})w\right)\right) \\
\times & \exp \left(\tfrac{\lambda}{2}\left(({\bar Q}P^{-1}z+((P^t)^{-1}-I_n){\bar z})w+
((I_n-P^{-1})z+P^{-1}Q{\bar z}){\bar w}\right)\right) \,dm(w)
\end{align*}
The integral $I(k)$ in the preceding formula can be evaluated by using Lemma \ref{lemgauss}
with \begin{equation*}M=\tfrac{\lambda}{4}\begin{pmatrix}-{\bar Q}P^{-1} &I_n+(P^t)^{-1}\\I_n+P^{-1}&P^{-1}Q \end{pmatrix}\end{equation*} and
\begin{equation*}u=\tfrac{\lambda}{2}({\bar Q}P^{-1}z+((P^t)^{-1}-I_n){\bar z})\,;\,v=\tfrac{\lambda}{2}((I_n-P^{-1})z+P^{-1}Q{\bar z}).\end{equation*}
Observing that, by Lemma \ref{lemmatrices}, we have
\begin{align*}
\begin{pmatrix}u&v\end{pmatrix}&M^{-1}\begin{pmatrix}u\\v\end{pmatrix}\\
&=\lambda \begin{pmatrix}z&{\bar z}\end{pmatrix}\begin{pmatrix}{\bar Q}P^{-1} &I_n-(P^t)^{-1}\\
-I_n+P^{-1}&P^{-1}Q \end{pmatrix}\begin{pmatrix}-{\bar Q}P^{-1} &I_n+(P^t)^{-1}\\
I_n+P^{-1}&P^{-1}Q \end{pmatrix}^{-1}\\
&\times \begin{pmatrix}{\bar Q}P^{-1} &-I_n+(P^t)^{-1}\\
I_n-P^{-1}&P^{-1}Q \end{pmatrix}
\begin{pmatrix}z\\{\bar z}\end{pmatrix}\\
&=\lambda \begin{pmatrix}z&{\bar z}\end{pmatrix}\begin{pmatrix}4\delta-{\bar Q}P^{-1} &3I_n-4\gamma-(P^t)^{-1}\\
3I_n-4\beta -P^{-1}&4\alpha+P^{-1}Q \end{pmatrix}\begin{pmatrix}z\\{\bar z}\end{pmatrix},
\end{align*}  we find that
\begin{equation*}I(k)=\pi^n(\Det U^tMU)^{-1/2}\exp \left(\tfrac{\lambda}{4}
\begin{pmatrix}z&{\bar z}\end{pmatrix}\begin{pmatrix}4\delta-{\bar Q}P^{-1} &3I_n-4\gamma-(P^t)^{-1}\\
3I_n-4\beta -P^{-1}&4\alpha+P^{-1}Q \end{pmatrix}\begin{pmatrix}z\\{\bar z}\end{pmatrix}\right).\end{equation*}
Then we get
\begin{equation*}W_0(\sigma(k))(z)={\lambda}^n(\Det P)^{-1/2}(\Det U^tMU)^{-1/2}
\exp \left(\lambda \begin{pmatrix}z&{\bar z}\end{pmatrix}\begin{pmatrix}\delta &\tfrac{1}{2}I_n-\gamma\\
\tfrac{1}{2}I_n-\beta &\alpha \end{pmatrix}\begin{pmatrix}z\\{\bar z}\end{pmatrix}
\right). \end{equation*}
Note that 
\begin{equation*}\Det U^tMU=(-1)^n2^{2n}\Det(M)=2^{2n}(\lambda/4)^{2n}\Det(J(k+I_
{2n}))(\Det (P))^{-1}.\end{equation*}

Finally, by Lemma  \ref{lemmatrices} again, we obtain 
\begin{equation*}W_0(\sigma(k))(z)=c_n(k) \exp \left( \tfrac{\lambda}{2}
\begin{pmatrix}z&{\bar z}\end{pmatrix}J(k-I_{2n})
(k+I_{2n})^{-1}\begin{pmatrix}z\\{\bar z}\end{pmatrix}\right)\end{equation*}
where $c_n(k)=2^n(\Det P)^{-1/2}((\Det (k+I_{2n})(\Det P)^{-1})^{-1/2}$.
The result hence follows by taking into account the fact that, since $k=U^{-1}gU$ with
$g\in Sp(n,{\mathbb R})$, we have $\Det(I_{2n}+k)=\Det(I_{2n}+g)\in {\mathbb R}$. 
\end{proof}

\begin{proposition}\label{propW0dsigma} Let $X=\bigl(\begin{smallmatrix} A&B\\
{\bar B}&{\bar A} \end{smallmatrix}\bigr)\in {\mathfrak s}$. Then we have
\begin{equation*}W_0(d\sigma(X))(z)=\tfrac{\lambda}{4}(z({\bar B}z)-{\bar z}(B{\bar z})-2(Az){\bar z}).
\end{equation*}
\end{proposition}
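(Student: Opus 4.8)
The plan is to compute the integral in the symmetric form (Eq. \ref{eq:intWsym}) of the complex Weyl correspondence directly, starting from the kernel $b_X$ of $d\sigma(X)$ found in Section \ref{sec:6}. First I would substitute $z+w$ and $z-w$ for the two arguments of
\[
b_X(z,w)=\left(-\tfrac{1}{2}\Tr(A)+\tfrac{\lambda}{4}z(\bar B z)-\tfrac{\lambda}{2}(Az)\bar w-\tfrac{\lambda}{4}\bar w(B\bar w)\right)\exp\left(\tfrac{\lambda}{2}z\bar w\right),
\]
writing $b_X(z+w,z-w)=P(z,w)\exp\bigl(\tfrac{\lambda}{2}(z+w)(\bar z-\bar w)\bigr)$, where $P(z,w)$ is the polynomial obtained from the parenthesis above after the substitution. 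The crucial simplification is that the exponential factor coming from $b_X$ absorbs the Gaussian weight of (Eq. \ref{eq:intWsym}): since
\[
\tfrac{\lambda}{2}(z+w)(\bar z-\bar w)+\tfrac{\lambda}{2}\bigl(-z\bar z-w\bar w+z\bar w-\bar z w\bigr)=-\lambda\, w\bar w,
\]
one is left with the purely Gaussian integral
\[
W_0(d\sigma(X))(z)=\left(\tfrac{\lambda}{\pi}\right)^n\int_{{\mathbb C}^n}P(z,w)\,e^{-\lambda\vert w\vert^2}\,dm(w).
\]

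Next I would expand $P(z,w)$ into monomials in the variables $w_j$ and $\bar w_k$ and integrate term by term. The only moments needed are
\[
\left(\tfrac{\lambda}{\pi}\right)^n\!\!\int_{{\mathbb C}^n}e^{-\lambda\vert w\vert^2}\,dm(w)=1,\qquad \left(\tfrac{\lambda}{\pi}\right)^n\!\!\int_{{\mathbb C}^n}w_j\bar w_k\,e^{-\lambda\vert w\vert^2}\,dm(w)=\tfrac{1}{\lambda}\delta_{jk};
\]
all moments linear in $w$ or $\bar w$ vanish, and so do $\int w_jw_k\,e^{-\lambda\vert w\vert^2}\,dm(w)$ and $\int\bar w_j\bar w_k\,e^{-\lambda\vert w\vert^2}\,dm(w)$ by rotation invariance. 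Consequently only the part of $P$ that is constant in $w$ survives, together with the unique bilinear term of type $w_j\bar w_k$, which appears solely inside $-\tfrac{\lambda}{2}(A(z+w))(\bar z-\bar w)$. Using $B^t=B$ and $\bar B^t=\bar B$, the constant-in-$w$ part of $P$ equals $-\tfrac{1}{2}\Tr(A)+\tfrac{\lambda}{4}z(\bar B z)-\tfrac{\lambda}{2}(Az)\bar z-\tfrac{\lambda}{4}\bar z(B\bar z)$, and the surviving bilinear term is $\tfrac{\lambda}{2}(Aw)\bar w=\tfrac{\lambda}{2}\sum_{j,k}a_{jk}w_k\bar w_j$, which integrates to $\tfrac{1}{2}\Tr(A)$.

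Adding the two contributions, the terms $-\tfrac{1}{2}\Tr(A)$ and $+\tfrac{1}{2}\Tr(A)$ cancel, leaving exactly $\tfrac{\lambda}{4}z(\bar B z)-\tfrac{\lambda}{2}(Az)\bar z-\tfrac{\lambda}{4}\bar z(B\bar z)=\tfrac{\lambda}{4}\bigl(z(\bar B z)-\bar z(B\bar z)-2(Az)\bar z\bigr)$, which is the desired formula. I do not expect any substantial obstacle: the argument reduces to an elementary Gaussian moment computation, and the only delicate point is correctly tracking which quadratic monomials in $w$ have nonzero mean against $e^{-\lambda\vert w\vert^2}$—in particular that $w_j\bar w_k$ contributes while $w_jw_k$ and $\bar w_j\bar w_k$ do not—and the resulting cancellation of the two $\Tr(A)$ terms. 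As a consistency check one could also recover the same expression from $S_\lambda(d\sigma(X))$ of Proposition \ref{propSsigma} via $W_0=B_\lambda^{-1/2}S_\lambda$ with $B_\lambda^{-1/2}=\exp(-\Delta/4\lambda)$, which on a quadratic symbol merely adds $-\tfrac{1}{4\lambda}\Delta S_\lambda(d\sigma(X))=\tfrac{1}{2}\Tr(A)$.
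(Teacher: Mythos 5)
Your proof is correct, but it follows a route the paper only mentions in passing rather than the one it actually carries out. You work directly from the integral formula (the symmetric form of Proposition \ref{prop:intW}), observe that the Gaussian factor of the kernel $b_X(z+w,z-w)$ combines with the weight to give exactly $e^{-\lambda|w|^2}$, and then reduce everything to the elementary moments $\int w_j\bar w_k\,e^{-\lambda|w|^2}=\tfrac{1}{\lambda}\delta_{jk}$ (with $w_jw_k$ and $\bar w_j\bar w_k$ contributing nothing); I checked the exponent cancellation, the normalization $2^n\,d\mu_\lambda=(\lambda/\pi)^n\,dm$, and the cancellation of the two $\tfrac12\Tr(A)$ terms, and all are right. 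The paper instead uses the polar-decomposition identity $W_0=B_\lambda^{-1/2}S_\lambda$ together with $B_\lambda=\exp(\Delta/2\lambda)$: since $S_\lambda(d\sigma(X))$ is quadratic, $B_\lambda^{-1/2}$ acts by adding the single term $-\tfrac{1}{\lambda}\sum_k\partial^2/\partial z_k\partial\bar z_k\,S_\lambda(d\sigma(X))=\tfrac12\Tr(A)$, which kills the constant in Proposition \ref{propSsigma} in one line. The paper's method is shorter but leans on two nontrivial external facts (the polar decomposition of $S_\lambda$ and the heat-semigroup formula for $B_\lambda$); yours is longer but entirely self-contained, using only the kernel of $d\sigma(X)$ and standard Gaussian integrals --- it is essentially the ``integral formula'' alternative the author explicitly acknowledges before opting for the faster argument. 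Your closing consistency check is precisely the paper's proof, so you have in effect supplied both.
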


\begin{proof}There are different ways to prove this result. For instance, one can differentiate
$W_0(\sigma(k))(z)$ or one can use the integral formula for $W_0(d\sigma(X))$, see Proposition \ref{prop:intW}. However, the fastest method is based on the formula $W_0=
B_{\lambda}^{-1/2}S_{\lambda}$, see Section \ref{sec:3}.
Since $B_{\lambda}=\exp (\Delta/2\lambda)$ where
$\Delta=4\sum_{k=1}^n\partial^2/\partial z_k\partial {\bar z}_k$, see Section \ref{sec:2},
we have $B_{\lambda}^{-1/2}=\exp (-\tfrac{1}{\lambda}\sum_{k=1}^n\partial^2/\partial z_k\partial {\bar z}_k)$. By using the formula for $S_{\lambda}(d\sigma(X))$ given in Proposition \ref{propSsigma}, we get
\begin{equation*}-\tfrac{1}{\lambda}\sum_{k=1}^n\partial^2/\partial z_k\partial {\bar z}_k
(S_{\lambda}(d\sigma(X)))=\tfrac{1}{2}\sum_{k=1}^n(Ae_k)e_k=\tfrac{1}{2}\Tr(A)\end{equation*} hence
\begin{equation*}(B_{\lambda}^{-1/2}S_{\lambda}(d\sigma(X)))(z)=\tfrac{\lambda}{4}(z({\bar B}z)-{\bar z}(B{\bar z})-2(Az){\bar z}).
\end{equation*} 
\end{proof}

\section{Applications} \label{sec:8}
Here we recover some known results about the classical Weyl symbols of the metaplectic representation operators of $Sp(n,{\mathbb R})$ and about the computation of some star-exponentials.
We take $\lambda =1$. 

\subsection{Weyl symbols of metaplectic representation operators for $Sp(n,{\mathbb R})$}
The metaplectic representation $\sigma$ of $S$ can be translate to $Sp(n,{\mathbb R})$ as follows, see \cite{Fo}, Chapter IV. For each $g\in Sp(n,{\mathbb R})$, we define
$\sigma'(g) ={\mathcal B}^{-1}\sigma(UgU^{-1}){\mathcal B}$ where  $U=\left(\begin{smallmatrix} I_n&iI_n\\
I_n&-iI_n\end{smallmatrix}\right)$. Then we can deduce from Proposition \ref{propW0sigma}
a formula for $W_1(\sigma'(g))$,  $g\in Sp(n,{\mathbb R})$. Recall that $W_1$ is the inverse of the classical Weyl correspondence $\mathcal W$, see Section \ref{sec:2}.

\begin{proposition}\label{propW1sigma} Let $g=\left(\begin{smallmatrix} A&B\\
C&D\end{smallmatrix}\right)\in Sp(n,{\mathbb R})$. Then we have
\begin{equation*}W_1(\sigma'(g))(x,y)=c'_n(g) \exp \left( -i\begin{pmatrix}x&
y\end{pmatrix}J(g-I_{2n})
(g+I_{2n})^{-1}\begin{pmatrix}x\\y\end{pmatrix}\right)\end{equation*}
where 
\begin{align*}
c'_n(g)&=c_n(UgU^{-1})=2^n(\Det(I_{2n}+g))^{-1/2}\, if \, \Det(I_{2n}+g)>0;\\
c'_n(g)&=-i2^n\vert \Det(I_{2n}+g)\vert^{-1/2}\,if \,Det(I_{2n}+g)<0\, and \,
\Arg(\Det(A+D+i(C-B)))\in ]0,\pi[;\\
c'_n(g)&=i2^n\vert \Det(I_{2n}+g)\vert^{-1/2}\, if \,\Det(I_{2n}+g)<0 \, and
\, \Arg(\Det(A+D+i(C-B)))\in ]-\pi,0[.
\end{align*}
\end{proposition}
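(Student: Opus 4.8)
The plan is to deduce Proposition \ref{propW1sigma} directly from Proposition \ref{propW0sigma} via the translation relation between $W_1$ and $W_0$ established in Proposition \ref{propconnect}. Concretely, since $\sigma'(g)={\mathcal B}^{-1}\sigma(UgU^{-1}){\mathcal B}$, Proposition \ref{propconnect} (extended to Hilbert-Schmidt, or more general, operators as noted after its proof) gives
\begin{equation*}W_1(\sigma'(g))(x,y)=W_0(\sigma(UgU^{-1}))(x+iy).\end{equation*}
So the task reduces to plugging $k=UgU^{-1}$ and $z=x+iy$ into the formula of Proposition \ref{propW0sigma}, with $\lambda=1$, and rewriting everything in terms of the real block matrix $g=\bigl(\begin{smallmatrix} A&B\\ C&D\end{smallmatrix}\bigr)$.

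The first key step is the bookkeeping for the quadratic form. One has $\bigl(\begin{smallmatrix}z\\{\bar z}\end{smallmatrix}\bigr)=U\bigl(\begin{smallmatrix}x\\y\end{smallmatrix}\bigr)$, so
\begin{equation*}\begin{pmatrix}z&{\bar z}\end{pmatrix}J(k-I_{2n})(k+I_{2n})^{-1}\begin{pmatrix}z\\{\bar z}\end{pmatrix}=\begin{pmatrix}x&y\end{pmatrix}U^tJ(k-I_{2n})(k+I_{2n})^{-1}U\begin{pmatrix}x\\y\end{pmatrix}.\end{equation*}
Next I would substitute $k=UgU^{-1}$, so that $k\pm I_{2n}=U(g\pm I_{2n})U^{-1}$ and hence $(k-I_{2n})(k+I_{2n})^{-1}=U(g-I_{2n})(g+I_{2n})^{-1}U^{-1}$, reducing the middle matrix to $U^tJU\,(g-I_{2n})(g+I_{2n})^{-1}U^{-1}U=U^tJU\,(g-I_{2n})(g+I_{2n})^{-1}$. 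A short computation gives $U^tJU=\bigl(\begin{smallmatrix} I_n&iI_n\\ iI_n&-I_n\end{smallmatrix}\bigr)\bigl(\begin{smallmatrix} I_n&iI_n\\ I_n&-iI_n\end{smallmatrix}\bigr)$... more carefully, $U^tJU = 2iJ$ (one should verify the sign and factor by direct multiplication). Combined with the prefactor $\tfrac{\lambda}{2}=\tfrac12$ this produces the exponent $i\begin{pmatrix}x&y\end{pmatrix}J(g-I_{2n})(g+I_{2n})^{-1}\begin{pmatrix}x\\y\end{pmatrix}$; the sign discrepancy with the stated $-i$ I would resolve by carefully tracking that $J$ is antisymmetric so that the quadratic form picks up the symmetric part, which flips the sign — this is the one place where a careless computation would give the wrong answer, so it is worth doing slowly.

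The second step handles the constant $c'_n(g)$. Since $g\in Sp(n,{\mathbb R})$, the conjugation $k=UgU^{-1}$ gives $\Det(I_{2n}+k)=\Det(I_{2n}+g)$, which is real, so the three-case structure of $c_n$ in Proposition \ref{propW0sigma} transfers verbatim; one just sets $c'_n(g)=c_n(UgU^{-1})$. The only genuinely new identification needed is the block $P$ of $k=UgU^{-1}$ in terms of $A,B,C,D$: writing out $U\bigl(\begin{smallmatrix} A&B\\ C&D\end{smallmatrix}\bigr)U^{-1}$ with $U^{-1}=\tfrac12\bigl(\begin{smallmatrix} I_n&I_n\\ -iI_n&iI_n\end{smallmatrix}\bigr)$ gives the $(1,1)$-block $P=\tfrac12\bigl((A+D)+i(C-B)\bigr)$, hence $\Det P=2^{-n}\Det(A+D+i(C-B))$. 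Since $\Arg$ of a nonzero scalar is unchanged under multiplication by the positive constant $2^{-n}$, the condition $\Arg(\Det P)\in\,]0,\pi[$ becomes $\Arg(\Det(A+D+i(C-B)))\in\,]0,\pi[$, and likewise for the other interval, matching the statement exactly.

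I expect the main obstacle to be purely computational: getting the sign and the factor $2i$ in the identity relating $U^tJU$ to $J$ correct, and correctly identifying which block of $UgU^{-1}$ is $P$ (one must be careful that $U$ here is the same $2n\times 2n$ matrix with $n\times n$ blocks as in Section \ref{sec:4}, and that $U^{-1}=\tfrac12\bigl(\begin{smallmatrix} I_n&I_n\\ -iI_n&iI_n\end{smallmatrix}\bigr)$). There is no conceptual difficulty: once Proposition \ref{propconnect} and Proposition \ref{propW0sigma} are in hand, everything is a change of variables $z\mapsto x+iy$ together with the conjugation $k\mapsto UgU^{-1}$, and the reality of $\Det(I_{2n}+g)$ makes the case analysis carry over unchanged.
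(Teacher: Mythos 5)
Your overall strategy is exactly the paper's: combine Proposition \ref{propconnect} with Proposition \ref{propW0sigma}, substitute $k=UgU^{-1}$ and $z=x+iy$, use $\bigl(\begin{smallmatrix}z\\ \bar z\end{smallmatrix}\bigr)=U\bigl(\begin{smallmatrix}x\\ y\end{smallmatrix}\bigr)$ to transport the quadratic form, and identify $P=\tfrac12\bigl(A+D+i(C-B)\bigr)$ so that the three-case description of $c_n(k)$ transfers verbatim (using that $\Det(I_{2n}+k)=\Det(I_{2n}+g)$ is real). All of that matches the paper's proof.

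The one genuine flaw is your handling of the sign in $U^tJU$. Direct multiplication gives
\begin{equation*}
U^tJU=\begin{pmatrix} I_n&I_n\\ iI_n&-iI_n\end{pmatrix}\begin{pmatrix} I_n&-iI_n\\ -I_n&-iI_n\end{pmatrix}=\begin{pmatrix} 0&-2iI_n\\ 2iI_n&0\end{pmatrix}=-2iJ,
\end{equation*}
not $2iJ$ as you wrote; combined with the prefactor $\tfrac{\lambda}{2}=\tfrac12$ this yields the exponent $-i\begin{pmatrix}x&y\end{pmatrix}J(g-I_{2n})(g+I_{2n})^{-1}\begin{pmatrix}x\\ y\end{pmatrix}$ directly, with no further adjustment needed. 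Your proposed repair of the resulting sign discrepancy --- that ``$J$ is antisymmetric so the quadratic form picks up the symmetric part, which flips the sign'' --- is not valid: passing to the symmetric part of a matrix never changes the value of the associated quadratic form, and in any case the Cayley-type matrix $J(g-I_{2n})(g+I_{2n})^{-1}$ is already symmetric for $g\in Sp(n,{\mathbb R})$, so there is nothing to flip. Correct the computation of $U^tJU$ and the rest of your argument is complete and coincides with the paper's.
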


\begin{proof} Let $g=\left(\begin{smallmatrix} A&B\\
C&D\end{smallmatrix}\right)\in Sp(n,{\mathbb R})$ and $k=UgU^{-1}=
\left(\begin{smallmatrix} P&Q\\
{\bar Q}&{\bar P}\end{smallmatrix}\right)$. By Proposition \ref{propconnect} and Proposition \ref{propW0sigma}, we have, for each $(x,y)\in {\mathbb R}^{2n}$, 
\begin{align*}W_1(\sigma'(g))&(x,y)=W_1({\mathcal B}^{-1}\sigma(k){\mathcal B})(x,y)=
W_0(\sigma(k))(z)\\
&=
c_n(k) \exp \left( \tfrac{1}{2}\begin{pmatrix}z&
{\bar z}\end{pmatrix}J(k-I_{2n})
(k+I_{2n})^{-1}\begin{pmatrix}z\\{\bar z}\end{pmatrix}\right)
\end{align*} where $z=x+iy\in {\mathbb C}^{n}$.
Now, since $\left(\begin{smallmatrix}z\\{\bar z}\end{smallmatrix}\right)=U
\left(\begin{smallmatrix}x\\y\end{smallmatrix}\right)$ and $k=UgU^{-1}$, we obtain
\begin{align*}
 \tfrac{1}{2}\begin{pmatrix}z&
{\bar z}\end{pmatrix}J(k-I_{2n})
(k+I_{2n})^{-1}\begin{pmatrix}z\\{\bar z}\end{pmatrix}=& \tfrac{1}{2} \begin{pmatrix}x&
y\end{pmatrix}U^tJU
(g-I_{2n})
(g+I_{2n})^{-1}\begin{pmatrix}x\\y\end{pmatrix}\\
=&-i\begin{pmatrix}x&y\end{pmatrix}
J(g-I_{2n})
(g+I_{2n})^{-1}\begin{pmatrix}x\\y\end{pmatrix}.
\end{align*}
Moreover, we have $\Det(k+I_{2n})=\Det(g+I_{2n})$, $P=\tfrac{1}{2}(A+D+i(C-B))$
and the rest of the proposition is just a reformulation of the discussion on the value of $c_n(k)$, see Proposition \ref{propW0sigma}.
\end{proof}

Similar formulas involving Cayley transform can be found in \cite{CR1}, \cite{CR2}, \cite{Hil}, \cite{Gos}, \cite{Der1}. In the excellent book \cite{CR1}, it seems that a factor $2^n$ must be
added in Formula (3.71) to make it consistent with Formula (3.80).

The expression of $W_1(\sigma'(g))$ takes a more simple form when $g=\exp(X)$, where
$X\in sp(n,{\mathbb R})$.

\begin{corollary} \label{corW1exp} Let $X\in sp(n,{\mathbb R})$. Then we have 
\begin{equation*}W_1(\sigma'(\exp(X)))(x,y)=(\Det(\cosh (\tfrac{1}{2}X)))^{-1/2}
\exp \left(-i\begin{pmatrix}x&y\end{pmatrix}J\tanh(\tfrac{1}{2}X)
\begin{pmatrix}x\\y\end{pmatrix}\right).
\end{equation*}
\end{corollary}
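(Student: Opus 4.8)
The plan is to specialize Proposition \ref{propW1sigma} to the case $g=\exp(X)$ and simplify both the exponent and the constant using elementary functional calculus on the matrix $X\in sp(n,{\mathbb R})$. First I would observe that the rational function $t\mapsto (t-1)/(t+1)$ evaluated at $t=e^{s}$ equals $\tanh(s/2)$; applying this via the holomorphic (or spectral) functional calculus to the matrix $e^{X}$ gives
\begin{equation*}(g-I_{2n})(g+I_{2n})^{-1}=(\exp(X)-I_{2n})(\exp(X)+I_{2n})^{-1}=\tanh(\tfrac{1}{2}X),\end{equation*}
which is legitimate as soon as $\exp(X)+I_{2n}$ is invertible, i.e. $-1$ is not an eigenvalue of $\exp(X)$. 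Substituting this into the exponent of Proposition \ref{propW1sigma} immediately yields the claimed factor $\exp\bigl(-i\begin{pmatrix}x&y\end{pmatrix}J\tanh(\tfrac{1}{2}X)\begin{pmatrix}x\\y\end{pmatrix}\bigr)$.

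Next I would handle the constant $c'_n(g)$. Writing $g=\exp(X)$ and using multiplicativity of the determinant over the spectral decomposition, one has
\begin{equation*}\Det(I_{2n}+g)=\Det(I_{2n}+\exp(X))=\Det\bigl(2\exp(\tfrac{1}{2}X)\cosh(\tfrac{1}{2}X)\bigr)=2^{2n}\Det(\exp(\tfrac{1}{2}X))\Det(\cosh(\tfrac{1}{2}X)),\end{equation*}
and since $\Tr(X)=0$ for $X\in sp(n,{\mathbb R})$ we get $\Det(\exp(\tfrac12 X))=1$, hence $\Det(I_{2n}+g)=2^{2n}\Det(\cosh(\tfrac{1}{2}X))$. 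Therefore $2^n(\Det(I_{2n}+g))^{-1/2}=(\Det(\cosh(\tfrac12 X)))^{-1/2}$ when $\Det(I_{2n}+g)>0$, which matches the stated formula. I would then argue that the sign/branch subtleties of Proposition \ref{propW1sigma} disappear here: the eigenvalues of $X$ come in quadruples $\{\mu,\bar\mu,-\mu,-\bar\mu\}$ (or in pairs on the real/imaginary axes), so $\Det(\cosh(\tfrac12 X))$ is real, and one can choose the square root $(\Det(\cosh(\tfrac12 X)))^{-1/2}$ continuously along the path $s\mapsto \exp(sX)$ starting from the value $1$ at $s=0$, where $\sigma'(\exp(0X))=\mathrm{Id}$ and $W_1(\mathrm{Id})=1$. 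This continuous-branch normalization is exactly what turns the three-case expression for $c'_n$ into the single clean expression $(\Det(\cosh(\tfrac12 X)))^{-1/2}$, with the understanding that the right branch is selected by continuity.

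The main obstacle I anticipate is the bookkeeping of the square-root branch: Proposition \ref{propW1sigma} is stated with an explicit determination depending on $\Arg(\Det(A+D+i(C-B)))$, and one must check that, along the curve $g_s=\exp(sX)$, the quantity $\Det(I_{2n}+g_s)$ either stays positive or, when it passes through zero (i.e. when $-1$ enters the spectrum of $g_s$), the metaplectic operator $\sigma'(g_s)$ is no longer trace-class and $W_1$ must be interpreted by analytic continuation — this is precisely the classical Maslov-type ambiguity. The cleanest way to finish is to note that both sides of the asserted identity are real-analytic in $X$ on the open set where $\cosh(\tfrac12 X)$ is invertible and agree near $X=0$ (where $\cosh\approx I_{2n}$, $\tanh\approx \tfrac12 X$, and $\sigma'(\exp X)\to\mathrm{Id}$), hence agree on the whole connected component; on the remaining locus the identity holds by continuity of both sides (in the appropriate distributional sense for $W_1$). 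I would keep the write-up short, citing \cite{Ho2}, \cite{Fo} for the branch conventions and emphasizing the two matrix identities above as the substance of the argument.
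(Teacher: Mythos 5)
Your proposal is correct and follows essentially the same route as the paper: apply Proposition \ref{propW1sigma} to $g=\exp(X)$, use $(\exp(X)-I_{2n})(\exp(X)+I_{2n})^{-1}=\tanh(\tfrac{1}{2}X)$, and factor $\Det(I_{2n}+\exp(X))=2^{2n}\Det(\exp(\tfrac{1}{2}X))\Det(\cosh(\tfrac{1}{2}X))=2^{2n}\Det(\cosh(\tfrac{1}{2}X))\geq 0$ so that only the first case of $c'_n(g)$ arises. Your extra discussion of branch continuity is more than the paper records but does not change the argument.
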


\begin{proof} We just apply Proposition \ref{propW1sigma} to $g=\exp(X)$, 
$X\in sp(n,{\mathbb R})$. We have 
\begin{equation*}\Det(g+I_{2n})=2^{2n}\Det(\exp (\tfrac{1}{2}X))
\Det(\cosh (\tfrac{1}{2}X))
=2^{2n}\Det(\cosh (\tfrac{1}{2}X))\geq 0 \end{equation*} hence
$c'_n(g)=(\Det(\cosh (\tfrac{1}{2}X)))^{-1/2}$ and 
$(g+I_{2n})(g-I_{2n})^{-1}=\tanh(\tfrac{1}{2}X)$. The result follows.
\end{proof}

We can also deduce the computation of $W_1(\sigma'(X))$, $X\in sp(n,{\mathbb R})$,
from Proposition \ref{propW0dsigma}.

\begin{proposition}\label{propW1dsigma} Let $X=\left(\begin{smallmatrix} A&B\\
C&-A^t\end{smallmatrix}\right)\in sp(n,{\mathbb R})$. Then we have
\begin{equation*}W_1(d\sigma'(X))(x,y)=\tfrac{1}{2}i(2y(Ax)+y(By)-x(Cx))=
-\tfrac{1}{2}i\begin{pmatrix}x&y\end{pmatrix}JX\begin{pmatrix}x\\y\end{pmatrix}.
\end{equation*}
\end{proposition}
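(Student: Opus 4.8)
The plan is to transfer the already-established formula for $W_0(d\sigma(X))$ from Proposition~\ref{propW0dsigma} through the Bargmann transform, exactly as was done for $W_0(\sigma(k))$ in Proposition~\ref{propW1sigma}. First I would fix $X=\left(\begin{smallmatrix} A&B\\ C&-A^t\end{smallmatrix}\right)\in sp(n,{\mathbb R})$ and set $\widetilde X:=UXU^{-1}\in{\mathfrak s}$; writing $\widetilde X=\left(\begin{smallmatrix} \alpha&\beta\\ {\bar\beta}&{\bar\alpha}\end{smallmatrix}\right)$, a direct block computation with $U=\left(\begin{smallmatrix} I_n&iI_n\\ I_n&-iI_n\end{smallmatrix}\right)$ gives $\alpha=\tfrac12(A-A^t)+\tfrac{i}{2}(C-B)$ and $\beta=\tfrac12(A+A^t)+\tfrac{i}{2}(C+B)$ (so that ${\bar\alpha}=\tfrac12(A-A^t)-\tfrac{i}{2}(C-B)$, consistent with $\widetilde X\in{\mathfrak s}$). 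Since $d\sigma'(X)={\mathcal B}^{-1}d\sigma(\widetilde X){\mathcal B}$ by differentiating $\sigma'(g)={\mathcal B}^{-1}\sigma(UgU^{-1}){\mathcal B}$, the (extended) Proposition~\ref{propconnect} yields
\begin{equation*}W_1(d\sigma'(X))(x,y)=W_0(d\sigma(\widetilde X))(z),\qquad z=x+iy.\end{equation*}

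Next I would substitute the formula of Proposition~\ref{propW0dsigma}, namely $W_0(d\sigma(\widetilde X))(z)=\tfrac14\big(z({\bar\beta}z)-{\bar z}(\beta{\bar z})-2(\alpha z){\bar z}\big)$ (here $\lambda=1$), and plug in $z=x+iy$ together with the expressions for $\alpha,\beta$ above. The $\Tr$-type term $-\tfrac12\Tr(\alpha)$ is absent here because $\Tr(\alpha)=0$ (as $\alpha$ has zero trace, being the ``$A$-block'' of an element of ${\mathfrak s}$ coming from $sp(n,{\mathbb R})$, where the diagonal block $A$ of $X$ contributes $A-A^t$, trace zero). Expanding $z({\bar\beta}z)$, ${\bar z}(\beta{\bar z})$ and $(\alpha z){\bar z}$ in terms of $x,y$ and collecting, the real (symmetric-matrix) contributions should cancel and the purely imaginary part should assemble into $\tfrac12 i\big(2y(Ax)+y(By)-x(Cx)\big)$. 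The bookkeeping is eased by noting that for a symmetric matrix $S$ and real vectors $x,y$ one has $(x+iy)S(x+iy)-(x-iy)S(x-iy)=4i\,xSy$ and $(x+iy)S(x+iy)+(x-iy)S(x-iy)=2(xSx-ySy)$, while for the mixed term $(\alpha z){\bar z}$ one separates $\alpha$ into its symmetric part $\tfrac12(A+A^t)-$ wait, rather its decomposition into the antisymmetric piece $\tfrac12(A-A^t)$ and the imaginary piece $\tfrac{i}{2}(C-B)$ and treats each accordingly.

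Finally, once the expression $\tfrac12 i\big(2y(Ax)+y(By)-x(Cx)\big)$ is obtained, I would recognize it as $-\tfrac12 i\begin{pmatrix}x&y\end{pmatrix}JX\begin{pmatrix}x\\y\end{pmatrix}$ by the elementary identity $JX=\left(\begin{smallmatrix} C&-A^t\\ -A&-B\end{smallmatrix}\right)$, so that $\begin{pmatrix}x&y\end{pmatrix}JX\begin{pmatrix}x\\y\end{pmatrix}=x(Cx)-x(A^ty)-y(Ax)-y(By)=x(Cx)-2y(Ax)-y(By)$, whose negative half times $i$ is precisely the claimed value. I expect the main obstacle to be purely clerical: correctly tracking the real symmetric parts through $\alpha,\beta$ and verifying they cancel, so that no spurious Gaussian-type quadratic form survives and only the linear-symplectic quadratic form $-\tfrac12 i\,{}^t\!(x,y)JX(x,y)$ remains; alternatively, one can bypass this by differentiating Corollary~\ref{corW1exp} at $X=0$, using $\tfrac{d}{dt}\big|_{t=0}\tanh(\tfrac{t}{2}X)=\tfrac12 X$ and $\tfrac{d}{dt}\big|_{t=0}(\Det\cosh(\tfrac{t}{2}X))^{-1/2}=0$, which gives the result with essentially no computation.
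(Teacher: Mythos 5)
Your proposal follows the paper's proof essentially verbatim: conjugate $X$ by $U$ to get $\widetilde X=\left(\begin{smallmatrix}\alpha&\beta\\ \bar\beta&\bar\alpha\end{smallmatrix}\right)$ with the same $\alpha,\beta$ the paper denotes $P,Q$, transfer via Proposition \ref{propconnect} to reduce to $W_0(d\sigma(\widetilde X))(x+iy)$, substitute Proposition \ref{propW0dsigma}, and expand in $x,y$ (a step the paper likewise dispatches as ``a tedious but easy calculation''). One harmless slip: your parenthetical claim that $\Tr(\alpha)=0$ is false in general (it equals $\tfrac{i}{2}\Tr(C-B)$), but this does not matter because the formula of Proposition \ref{propW0dsigma} contains no trace term in the first place.
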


\begin{proof} Let $X=\left(\begin{smallmatrix} A&B\\
C&-A^t\end{smallmatrix}\right)\in sp(n,{\mathbb R})$. Define $Y=UXU^{-1}=
\left(\begin{smallmatrix} P&Q\\
{\bar Q}&{\bar P}\end{smallmatrix}\right)\in {\mathfrak s}$. Then we have
\begin{equation*}P=\tfrac{1}{2}(A+D+i(-B+C))\quad;\quad 
Q=\tfrac{1}{2}(A-D+i(B+C)).\end{equation*}
Now, we have
\begin{align*}W_1&(d\sigma'(X))(x,y)=W_1({\mathcal B}^{-1}d\sigma(Y){\mathcal B})(x,y)=
W_0(d\sigma(Y))(x+iy)\\
&=\tfrac{1}{4}((x+iy){\bar Q}(x+iy)-(x-iy)Q(x-iy)-2(P(x+iy))(x-iy)).
\end{align*}
By replacing $P$ and $Q$ with the expressions given above, the result follows from a tedious but easy calculation.
\end{proof}

\noindent \textit {Remark.} Let $X=\in sp(n,{\mathbb R})$. Then $M=\tfrac{1}{2}JX$ is a symmetric matrix and 
\begin{equation*}q_M(x,y)=\begin{pmatrix}x&y\end{pmatrix}M\begin{pmatrix}x\\y\end{pmatrix}
\end{equation*} is a quadratic form which is the Weyl symbol of the operator $id\sigma'(X)$.

On the other hand, by Corollary \ref{corW1exp}, we have, for each $t\in {\mathbb R}$ and each $(x,y)\in  {\mathbb R}^{2n}$,
\begin{equation*}\label{eqHo2}W_1(\exp (d\sigma'(tX)))(x,y)=(\Det(\cosh (\tfrac{1}{2}tX)))^{-1/2}
\exp \left(-i\begin{pmatrix}x&y\end{pmatrix}J\tanh(\tfrac{1}{2}tX)
\begin{pmatrix}x\\y\end{pmatrix}\right).\end{equation*}
By the identity theorem for analytic functions, we can take $t=i$ in the preceding equality.
This gives
\begin{align*}
W_1(\exp (d\sigma'(iX)))(x,y)=&(\Det(\cos (\tfrac{1}{2}X)))^{-1/2}
\exp \left(\begin{pmatrix}x&y\end{pmatrix}J\tan (\tfrac{1}{2}tX)
\begin{pmatrix}x\\y\end{pmatrix}\right)\\
=&(\Det(\cos (JM)))^{-1/2}
\exp \left(-\begin{pmatrix}x&y\end{pmatrix}J\tan (JM)
\begin{pmatrix}x\\y\end{pmatrix}\right).
\end{align*}
Then we recover a result of \cite{Ho2} about the Weyl symbol of the exponential of an operator whose Weyl symbol is a quadratic form $q_M(x,y)$.

\subsection{Star exponentials} The preceding results can be reformulated in terms of star exponentials for the Moyal star product. Let us recall that the notion of star product was introduced in \cite{BFF0} in order to interpret quantum mechanics as deformation of classical mechanics. Roughly speaking, a star product on a Poisson manifold $M$ is a formal deformation
$(f,g)\rightarrow f\ast g=f.g+t\{f,g\}+\sum_{l\geq 2}t^lC_l(f,g)$ of the pointwise multiplication of
$C^{\infty}(M)$. An important problem is then the computation of the star exponential
$\exp_{\ast}(f)=\sum_{l\geq 0}\frac{1}{l!}f^{\ast,l}$ for some functions $f$, see \cite{Arn},
\cite{Fr}. Note that such computations are usually done by solving some differential systems,
see \cite{BFF}, \cite{BM}. Here we only consider the Moyal star product on ${\mathbb R}^{2n}$ defined as follows.

Take coordinates
$(p,q)$ on ${\mathbb R}^{2n}\cong  {\mathbb R}^{n}\times  {\mathbb R}^{n}$ and let
$x=(p,q)$. Then one has $x_i=p_i$ for $1\leq i \leq n$ and $x_i=q_{i-n}$ for $n+1\leq i\leq 2n$. For $u,v\in C^{\infty}({\mathbb R}^{2n})$, define $P^0(u,v):=uv$,
\begin{equation*}P^1(u,v):=\sum_{k=1}^n\left( \frac{\partial u}{\partial p_k}
\frac{\partial v}{\partial q_k}-\frac{\partial u}{\partial q_k}
\frac{\partial v}{\partial p_k}\right)=\sum_{1\leq i,j\leq n}\Lambda^{ij}{\partial_{x_i}}u
{\partial_{x_j}}v \end{equation*}
(the Poisson brackets) and, more generally, for $l\geq 2$,
\begin{equation*}P^l(u,v):=\sum_{1\leq i_1,\ldots, i_l,j_1,\ldots,j_l\leq n}
\Lambda^{i_1j_1}\Lambda^{i_2j_2}\cdots \Lambda^{i_lj_l}\partial^l_{x_{i_1}\ldots x_{i_l}}u \,\partial^l_{x_{j_1}\ldots x_{j_l}}v.\end{equation*}

Then the Moyal product $\ast$ is the following formal deformation of the pointwise multiplication of $C^{\infty}({\mathbb R}^{2n})$
\begin{equation*}u \ast v:=\sum_{l\geq 0}\frac{t^l}{l!}P^l(u,v)\end {equation*}
where $t$ is a formal parameter.

Let us restrict $\ast$ to polynomials on ${\mathbb R}^{2n}$ (this is sufficient for our purpose) and take $t=-i/2$.
Then $\ast$ induces an associative product on the polynomials also denoted by $\ast$. 

On the other hand, the classical Weyl correspondence ${\mathcal W}$ (see Section \ref{sec:3})
can be extended to polynomials \cite{Ho1}. More precisely, if $f(p,q)=u(p)q^{\alpha}$ where $u$ is a polynomial on ${\mathbb R}^n$ then we have
\begin{equation*}\label{eq:W} ({\mathcal W}(f)\varphi )(p)=\left(i\frac {\partial }{ \partial
s}\right)^{\alpha} \left( u(p+\tfrac{1}{2}s)\,\varphi (p+s) \right) \Bigl
\vert _{s=0},\end{equation*}  see \cite{Vo}.
Hence if $f$ is a polynomial then ${\mathcal W}(f)$ is a differential operator with polynomial coefficients. Moreover, we can show that $\ast$ corresponds to the composition of operators in the Weyl quantization, that is, for each polynomials $f_1,f_2$ on ${\mathbb R}^{2n}$, 
we have ${\mathcal W}(f_1\ast f_2)={\mathcal W}(f_1){\mathcal W}(f_2)$. Equivalently, 
we also have $W_1(A_1)\ast W_1(A_2)=W_1(A_1A_2)$ for each differential operators $A_1,A_2$ with polynomial coefficients.

Let $M$ be a real, symmetric $(2n)\times (2n)$ matrix. Let $q_M$ be the quadratic form on
${\mathbb R}^{2n}$ associated with $M$. Let $X=-2JM\in sp(n,{\mathbb R})$. Then by Proposition \ref{propW1dsigma} we have $W_1(d\sigma'(X))=-iq_M$, hence
\begin{equation*}\exp_{\ast}(-iq_M)=\exp_{\ast}(W_1(d\sigma'(X)))=W_1(\exp(d\sigma'(X)))
=W_1(\sigma'(\exp(X)))\end{equation*}
and
\begin{equation*}\exp_{\ast}(-iq_M)(x,y)=
(\Det(\cosh (JM))^{-1/2}
\exp \left(i\begin{pmatrix}x&y\end{pmatrix}J\tanh (JM)
\begin{pmatrix}x\\y\end{pmatrix}\right).\end{equation*}
This is precisely the equation given in \cite{BM}, Theorem 1. In particular, for $q_M(x,y)=t(x^2+y^2)$, $t\in {\mathbb R}$, we have $JM=tJ$, $\cosh (JM)=\cos(t)I_{2n}$,
$\tanh(JM)=\tan (t)J$ and the preceding formula becomes
\begin{equation*}\exp_{\ast}(-it(x^2+y^2))=(\cos (t))^{-1/2}\exp( -i\tan (t)(x^2+y^2)).
\end{equation*}
Up to mormalization, we recover the formula of \cite{BFF}, Proposition 1.

Note that, in general, computations of star exponentials involve special functions, see
\cite{BFF}, \cite{GVS}, \cite{Fr}.

Strangely enough, it seems that the connection between the computation of the star exponential
of a quadratic form (for the Moyal product) and the computation of the Weyl symbol of the exponential of a differential operator whose Weyl symbol is a quadratic form
has not been mentioned in the literature.

\end{document}